\newcommand*{\inv}{^{-1}}
\newcommand*{\cQ}{{\mathcal Q}}
\newcommand*{\cD}{{\mathcal D}}
\newcommand*{\cF}{{\mathcal F}}
\newcommand*{\cL}{{\mathcal L}}
\newcommand*{\cE}{{\mathcal E}}
\newcommand{\av}[1]{\left\Vert #1\right\Vert}
\newtheorem{theorem}{Theorem}[section]
\newtheorem{proposition}[theorem]{Proposition}
\newtheorem{lemma}[theorem]{Lemma}
\newtheorem{corollary}[theorem]{Corollary}
\newtheorem*{remark}{Remark}
\theoremstyle{definition}
\newtheorem{definition}[theorem]{Definition}
\begin{document}
\title[A non-linear characterization of stochastic completeness]{A non-linear characterization of stochastic completeness of graphs}

\author{Marcel Schmidt}
\address{Mathematisches Institut, Universit\"at Leipzig, 04109  Leipzig, Germany}
\email{marcel.schmidt@math.uni-leipzig.de}

\author{Ian Zimmermann}
\address{Mathematisches Institut, Friedrich-Schiller-Universit\"at Jena, 07743 Jena, Germany}
\email{ian.zimmermann@uni-jena.de}

\begin{abstract}
We study non-linear Schrödinger operators on graphs. We construct minimal nonnegative solutions to corresponding semi-linear elliptic equations and use them to introduce the notion of stochastic completeness at infinity in a non-linear setting. We provide characterizations for this property in terms of a semi-linear Liouville theorem. It is employed to establish a non-linear characterization for stochastic completeness, which is a graph version of a recent result on Riemannian manifolds.   
\end{abstract}

\maketitle

\section{Introduction} \label{sec_introduction}

A Riemannian manifold $M$ is called {\em stochastically complete} if the minimal Brownian motion on it has infinite lifetime. In terms of the semigroup induced by the Friedrichs extension of the Laplacian (which we simply denote by $\Delta$), this is equivalent to the {\em conservation property} 
$$e^{t\Delta } 1  = 1, \text{ all } t > 0. $$
By standard arguments it can also be expressed through the resolvent and then reads $\alpha (\alpha - \Delta)^{-1} 1 = 1$, all $\alpha > 0$. 

There is a huge amount of literature on geometric criteria involving volume growth and curvature bounds that ensure stochastic completeness on manifolds \cite{gaf59,kha60,aze74,KarpLi,Yau78,Ich82,gri86,Hsu89,Tak89,Dav92,Mur94,stu94,Ish01,Mur03,PRS03,RSV05,MV13} (this list is not comprehensive and we refer to  \cite{gri99} for further background). Beyond manifolds, in recent years there has been an increased interest in such geometric criteria for stochastic completeness of nonlocal operators e.g. for discrete graph Laplacians \cite{Woj08,Woj11,Hua11,Huang12,GHM12,Fol14,Huang14, HKS20} (we refer to the notes after Chapter~14 in \cite{KLW21} for a more detailed account of the subject).


%

The mentioned geometric criteria for stochastic completeness are based on analytic and probabilistic characterizations and we refer to the survey \cite{gri99} for a comprehensive overview (see also \cite{KLW21} for the discrete case). Here, we only mention two characterizations: Due to the parabolic maximum principle the semigroup $(e^{t\Delta})_{t>0}$ induces minimal nonnegative solutions to the heat equation 
$$\begin{cases}
   \partial_t u = \Delta u &\text{on } (0,\infty) \times M \\
   u(0,\cdot) = u_0&\text{on } M
  \end{cases}
$$
 for given $u_0 \geq 0$. Similarly, by the elliptic maximum principle the resolvent  $(\alpha - \Delta)^{-1}$, $\alpha > 0$, yields minimal nonnegative solutions to the Poisson equation $(\alpha -\Delta ) f = h$ for given $h \geq 0$ and $\alpha > 0$. With these observations stochastic completeness (more precisely the conservation property) can be seen to be equivalent to uniqueness of bounded solutions to the heat equation/poisson equation.    

 In recent years these characterizations of stochastic completeness have been expanded to more general equations in two different directions.
 
 {\em 1. Schrödinger operators:} If one considers the Schrödinger operator $\Delta - V$ with nonnegative potential $V\neq 0$, then the conservation property of the semigroup does not hold. In this case, one always finds $e^{t(\Delta-V)}1 < 1$ for all $t > 0$, but the heat equation and the Poisson equation with the operator $\Delta-V$ (instead of $\Delta$) might still have unique bounded solutions. It was first observed in \cite{kel12} for graphs and then extended to Riemannian manifolds in \cite{mas20} that uniqueness of bounded solutions to the heat equation/Poisson equation with respect to $\Delta-V$ is equivalent to a {\em generalized conservation property}, which is also referred to as {\em stochastic completeness at infinity}.  
 
 {\em 2. Non-linear equations:} A different perspective is taken in \cite{gri20, gri23}. There, it is proven that stochastic completeness of $M$ is equivalent to: 
 
 \begin{itemize}
  \item For one strictly increasing and sufficiently regular $\psi\colon [0,\infty) \to [0,\infty)$ and all $u_0 \in L_+^\infty(M)$, the non-linear filtration equation
 $$\begin{cases}
   \partial_t u = \Delta \psi(u) &\text{on } (0,\infty) \times M \\
   u(0,\cdot) = u_0&\text{on } M
  \end{cases}
$$
has unique bounded solutions. 

\item For one strictly increasing continuous $\varphi \colon [0,\infty) \to [0,\infty)$ with $\varphi(0) = 0$ the semi-linear elliptic equation $\Delta f = \varphi(f)$ only has trivial nonnegative bounded (sub)solutions. 
 \end{itemize}

 The aim of this paper is to extend the elliptic part of both perspectives (generalized conservation property, non-linear characterizations of stochastic completeness) to non-linear Schrödinger operators on graphs. To discuss our main results we start with a reformulation of the semi-linear elliptic equation. If $\varphi \colon \IR \to \IR$ is strictly increasing with $\varphi(0) = 0$,  the equation $\Delta f = \varphi(f)$ can be reformulated as $\varphi^{-1}(\Delta f) - f = 0$. We replace $f$ by $-g$,  $-\Delta$ by the weighted graph Laplacian $\cL$, additionally introduce a nonnegative potential $W$ with $\inf W > 0$, and study the semi-linear equation 
 $$\varphi^{-1} (\cL g) + W g = h.$$
 For nonnegative $h$ we use a non-linear comparison principle and variational methods to construct minimal (super)solutions, see Theorem~\ref{theorem:minimality resolvent}, which is our first main result.  We employ these minimal (super)solutions to characterize the triviality of nonpositive bounded supersolutions to the equation $\varphi^{-1} (\cL g) + W g = 0$ via a non-linear conservation property, see Theorem~\ref{theorem:uniqueness of bounded subsolutions}, which is our second main result. This extends the main result of \cite{kel12} to certain non-linear operators and gives rise to a definition of stochastic completeness at infinity in a semi-linear setting, see Definition~\ref{definition:nonlinear stochastic completeness}. We provide a geometric condition that ensures stochastic completeness at infinity for big enough $W$, see Proposition~\ref{prop:geometric criterion}
 
 We also use minimal supersolutions to establish (and slightly generalize) the main elliptic result of \cite{gri23} for graphs. We show that stochastic completeness is equivalent to the triviality of bounded nonnegative solutions to  $ -\cL f  = \varphi(W f)$ for one strictly increasing continuous $\varphi \colon \IR \to \IR$ with $\varphi(0) = 0$ and one bounded $W$ with $\inf W >0$ (or all $W$ with $\inf W >0$, \cite{gri23} treats the case $W = 1$), see Theorem~\ref{theorem:stochastic completeness}, which is our third main result. It actually goes beyond \cite{gri23} by also providing non-linear conservation properties that characterize stochastic completeness.  
 
We wrote this paper specifically for the graph case for two reasons. On the one hand, as discussed above, stochastic completeness of graphs has gained quite some attention in recent years. On the other hand, the graph Laplacian is a non-local operator and certain tools (chain rules and local estimates) are not available. We wanted to provide a proof of one of the main results of  \cite{gri20,gri23}  that is less reliant on local PDE techniques but rather uses variational methods (plus a comparison principle). This approach makes it possible to extend the results to other operators, such as e.g. fractional Laplacians or Laplacians on metric graphs. 

A characterization of stochastic completeness of graphs in terms of uniqueness of bounded solutions to  non-linear filtration equations is beyond the scope of this paper.  The reason is the lack of a sufficiently general theory of positive solutions to non-linear parabolic equations on graphs. A first step towards such a theory has been recently taken in \cite{BSW22}, where $\ell^1$-solutions for the porous medium equation on graphs are considered. We note however, that a characterization of stochastic completeness via uniqueness of bounded nonnegative subsolutions to filtration equation with respect to the special nonlinearity $\psi(t) = \log(t)$ (which is somewhat skew to the assumptions in \cite{gri23}, which require $\psi(0) = 0$) is already treated in \cite{Ma22}.

{\bf Acknowledgements.} M.S. acknowledges financial support of the DFG within the priority programme ’Geometry at Infinity’.

\section{Preliminaries}

In this section we introduce the setting of operators on weighted discrete graphs. We closely follow \cite{KLW21} and refer to it for claims which are not explained in detail. 

\subsection{Graphs and formal objects}

Throughout we let $X \neq \emptyset$ be a countable set. We write $C(X)$ for the real-valued functions on $X$ and $C_c(X)$ for the real-valued functions of finite support.  Every function $m \colon X \to (0,\infty)$ induces a measure on all subsets of $X$, which we also denote by $m$, via
$$m(A) = \sum_{x \in A} m(x),  \quad A \subseteq X.$$
We call such a pair $(X,m)$ {\em discrete measure space}. For $1 \leq p < \infty$ we let 
$$\ell^p(X,m) = \{f \colon X \to \IR\mid \sum_{x \in X} |f(x)|^pm(x) < \infty\}$$
denote the real-valued discrete Lebesgue space and equip it with the norm $$\av{f}_p = \left(\sum_{x \in X} |f(x)|^pm(x)\right)^{1/p}.$$ 
The space of bounded real-valued functions is denoted by $\ell^\infty(X)$ and equipped with the supremum norm $\av{\cdot}_\infty$. 

A {\em weighted graph} over $X$ is a symmetric function $b\colon X \times X \to [0,\infty)$ with $b(x,x) = 0$, $x \in X$, such that at all $x \in X$ the {\em weighted vertex degree}  is finite, i.e., 
$$\deg(x) := \sum_{y \in X} b(x,y) <  \infty. $$
The elements of $X$ are called {\em vertices}. Two vertices $x,y \in X$ are thought to be connected by an {\em edge} if $b(x,y) > 0$, in which case we write $x \sim y$. A {\em path} is a finite of infinite sequence of vertices $\gamma = (x_1,x_2,\ldots)$ such that $x_j \sim x_{j+1}$, for all $j =1,2,\ldots$. We say that a subset $U \subseteq X$ is {\em connected} if for every $x,y \in U$ there exists a path $\gamma$ in $U$ that contains $x$ and $y$. 

The {\em graph energy} is the functional 
$$\cQ \colon  C(X) \to [0,\infty], \quad \cQ(f) = \frac{1}{2} \sum_{x,y \in X} b(x,y) (f(x) - f(y))^2 $$
and the corresponding space of functions of finite energy is given by 
$$\cD = \{f \in C(X) \mid \cQ(f) < \infty\}.$$
By polarization $\cQ$ induces a bilinear form on $\cD$, which we also denote by $\cQ$. In this sense, we have $\cQ(f) = \cQ(f,f)$ for $f \in \cD$. 

By our assumption on the finiteness of the weighted vertex degree we have $C_c(X) \subseteq \cD$. We let $\cD_0$ be the space of those $f \in \cD$ for which there exists a sequence $(f_n)$ in $C_c(X)$ with $f_n \to f$ pointwise, as $n \to \infty$, and  $\lim_{n \to \infty} \cQ(f-f_n) = 0$.  Moreover, we let 
$$\cQ_0 \colon C(X) \to [0,\infty],\quad \cQ_0(f) = \begin{cases}
                                                     \cQ(f) &\text{if } f \in \cD_0\\
                                                     \infty &\text{else}
                                                    \end{cases}.
$$
\begin{lemma}\label{lemma:pointwise semicontinuity}
The functionals $\cQ$ and $\cQ_0$ are lower semicontinuous with respect to pointwise convergence. In particular, if  $(f_n)$ in $C(X)$ converges pointwise to $f \in C(X)$ and $(\cQ_0(f_n))$ is bounded, then $f \in \cD_0$ and 
$$\cQ_0(f) \leq \liminf_{n \to \infty} \cQ_0(f_n).$$
\end{lemma}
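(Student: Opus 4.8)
The plan is to treat the two functionals separately, handling $\cQ$ by a direct Fatou argument and $\cQ_0$ by combining this with the Hilbert space structure underlying $\cD_0$. For $\cQ$, I would observe that $\cQ(f) = \frac{1}{2}\sum_{x,y} b(x,y)(f(x)-f(y))^2$ is a sum of nonnegative terms indexed by the countable set $X \times X$, and that each summand $b(x,y)(f_n(x)-f_n(y))^2$ converges to $b(x,y)(f(x)-f(y))^2$ whenever $f_n \to f$ pointwise. Applying Fatou's lemma with respect to the counting measure on $X \times X$ then yields $\cQ(f) \le \liminf_n \cQ(f_n)$ at once, which is the desired lower semicontinuity of $\cQ$.

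For $\cQ_0$ (and the ``in particular'' claim, which is just the finite case of its lower semicontinuity) I would suppose $f_n \to f$ pointwise with $L := \liminf_n \cQ_0(f_n) < \infty$, the case $L = \infty$ being trivial. Passing to a subsequence I may assume $\cQ_0(f_{n_k}) \to L$, so that all $f_{n_k}$ lie in $\cD_0$ and have energies bounded by some constant $C$. The crucial point is then to establish $f \in \cD_0$. For this I would use that, after fixing a base point $o$ in each connected component, the seminorm $\cQ$ together with point evaluation at $o$ turns $\cD$ into a Hilbert space in which $\cD_0$ is a closed, hence weakly closed, subspace, and in which every point evaluation $g \mapsto g(x)$ is a bounded linear functional (this follows by estimating $|g(x)-g(o)|$ along a path from $o$ to $x$ via Cauchy--Schwarz). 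Since $f_{n_k}(o) \to f(o)$ is bounded and $\cQ(f_{n_k}) \le C$, the sequence $(f_{n_k})$ is bounded in this Hilbert space, so a further subsequence converges weakly to some $g \in \cD_0$. Continuity of the point evaluations forces $f_{n_{k_j}}(x) \to g(x)$ for every $x$, while by hypothesis $f_{n_{k_j}}(x) \to f(x)$; hence $g = f$ and therefore $f \in \cD_0$.

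It then remains to assemble the energy bound. Since $\cQ_0 \ge \cQ$ pointwise and $f \in \cD_0$, the first step gives $\cQ_0(f) = \cQ(f) \le \liminf_n \cQ(f_n) \le \liminf_n \cQ_0(f_n)$, which is the asserted inequality.

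I expect the main obstacle to be the membership $f \in \cD_0$: unlike finiteness of energy, this is not a pointwise condition and so cannot be read off from Fatou; it genuinely requires the completeness of $\cD_0$ as a Hilbert space together with weak compactness and the continuity of point evaluations. A minor additional point is the disconnected case, where one fixes a base point per component; this is harmless, as both $\cQ$ and the definition of $\cD_0$ decompose over the connected components, so the argument reduces to the connected situation.
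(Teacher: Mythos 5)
Your proof is correct, but it takes a genuinely different route from the paper's. For $\cQ$ you and the paper do the same thing (Fatou's lemma with respect to counting measure on $X\times X$); for $\cQ_0$, however, the paper gives no argument at all and instead cites \cite[Proposition~3.5]{MS23}, which proves the lower semicontinuity in a more general framework of energy forms. Your replacement is a self-contained functional-analytic argument: realize $\cD$ (per connected component, with base point $o$) as a Hilbert space under the norm $u \mapsto (\cQ(u)+u(o)^2)^{1/2}$, check that $\cD_0$ is a norm-closed, hence weakly closed, subspace, extract a weak limit from the energy-bounded subsequence, and identify it with the pointwise limit via the weak continuity of point evaluations. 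You correctly isolated the crux: membership $f \in \cD_0$ cannot be read off from Fatou, since $\cQ(f - f_{n_k}) \to 0$ may fail along the given sequence, so some compactness is genuinely needed. What the citation buys is brevity and generality; what your argument buys is self-containedness, at the price of three verifications you only sketch, all routine but not free: completeness of $\cD$ under this norm (which itself uses the already-proved semicontinuity of $\cQ$), closedness of $\cD_0$ (a diagonal argument choosing $C_c(X)$-approximants with pointwise control along a finite exhaustion), and the reduction to the connected case when there are infinitely many components, where passing from the componentwise conclusions back to the global one requires another diagonal argument for $f \in \cD_0$ and superadditivity of $\liminf$ (Fatou for counting measure over the components) for the energy inequality. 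It is also worth noting that your weak-compactness step is in the same spirit as the Banach--Saks argument the paper itself uses in Lemma~\ref{lemma:bounded approximation}: applying Banach--Saks to $C_c(X)$-approximants of the $f_{n_k}$ would yield the same conclusion while avoiding base points and the connectivity discussion altogether.
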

\begin{proof}
 The lower semicontinuity of $\cQ$ is a simple application of Fatou's lemma. For the lower semicontinuity of $\cQ_0$ we refer to \cite[Proposition~3.5]{MS23}.
\end{proof}

\begin{lemma}\label{lemma:compatibility with contractions}
 Let $f \in C(X)$ and let $C \colon \IR \to \IR$ be a normal contraction. Then 
 $$\cQ(C \circ f) \leq \cQ(f) \text{ and } \cQ_0(C \circ f) \leq \cQ_0(f).$$
 In particular, for $f,g \in C(X)$ we have 
 $$\cQ(f \wedge g) \leq \cQ(f) + \cQ(g),$$
 where $f \wedge g = \min\{f,g\}$. 
\end{lemma}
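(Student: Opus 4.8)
The first inequality is the only part that follows directly from the hypothesis on $C$, and the plan is to argue edge by edge. A normal contraction satisfies $C(0)=0$ and $|C(s)-C(t)|\leq|s-t|$, so for every pair $x,y$ we have $(C(f(x))-C(f(y)))^2\leq (f(x)-f(y))^2$. Multiplying by $b(x,y)$ and summing over all $x,y$ (all terms being nonnegative, so no convergence subtlety arises) yields $\cQ(C\circ f)\leq\cQ(f)$ at once.

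For the inequality involving $\cQ_0$ the difficulty is that membership in $\cD_0$ is a closure condition, and $C\circ f-C\circ f_n$ is \emph{not} controlled in energy by $f-f_n$ through the contraction property; so I would not try to establish energy convergence of the approximants directly, but rather invoke the lower semicontinuity of $\cQ_0$ from Lemma~\ref{lemma:pointwise semicontinuity}. If $f\notin\cD_0$ the claim is vacuous, so assume $f\in\cD_0$ with an approximating sequence $(f_n)$ in $C_c(X)$ satisfying $f_n\to f$ pointwise and $\cQ(f-f_n)\to 0$. Since $C(0)=0$, each $C\circ f_n$ still has finite support, hence lies in $C_c(X)\subseteq\cD_0$; and since $C$ is continuous, $C\circ f_n\to C\circ f$ pointwise. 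The first part gives $\cQ_0(C\circ f_n)=\cQ(C\circ f_n)\leq\cQ(f_n)$, while the reverse triangle inequality for the seminorm $\cQ^{1/2}$ shows $\cQ(f_n)\to\cQ(f)$, so $(\cQ_0(C\circ f_n))$ is bounded. Lemma~\ref{lemma:pointwise semicontinuity} then forces $C\circ f\in\cD_0$ together with
$$\cQ_0(C\circ f)\leq\liminf_{n\to\infty}\cQ_0(C\circ f_n)\leq\liminf_{n\to\infty}\cQ(f_n)=\cQ(f)=\cQ_0(f).$$

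For the final assertion I would not route through the contraction bound, since the triangle inequality for $\cQ^{1/2}$ is too lossy to recover the sharp constant; instead I would prove a sharper pointwise estimate directly. Writing $s=f(x)$, $s'=f(y)$, $t=g(x)$, $t'=g(y)$, a short case distinction on which of $s,t$ and of $s',t'$ attains the respective minimum gives the joint Lipschitz bound $|\min\{s,t\}-\min\{s',t'\}|\leq\max\{|s-s'|,|t-t'|\}$, whence $(\min\{s,t\}-\min\{s',t'\})^2\leq (s-s')^2+(t-t')^2$. Weighting by $b(x,y)$ and summing over all $x,y$ produces $\cQ(f\wedge g)\leq\cQ(f)+\cQ(g)$. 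The main obstacle throughout is the $\cD_0$-membership in the middle step: every other bound is an elementary pointwise comparison on edges, and the role of Lemma~\ref{lemma:pointwise semicontinuity} is precisely to convert the easy pointwise convergence and the uniform energy bound on the compactly supported approximants $C\circ f_n$ into the desired closure statement.
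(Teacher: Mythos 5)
Your proposal is correct, and for the two main inequalities it is essentially the paper's proof: the bound for $\cQ$ is the same edgewise estimate $(C(f(x))-C(f(y)))^2\leq (f(x)-f(y))^2$, and for $\cQ_0$ you take exactly the paper's route — approximants $C\circ f_n\in C_c(X)\subseteq \cD_0$ converging pointwise, the energy convergence $\cQ(f_n)\to\cQ(f)$, the first inequality applied to each $f_n$, and Lemma~\ref{lemma:pointwise semicontinuity} to close the argument; like the authors, you correctly recognize that one cannot control $\cQ(C\circ f-C\circ f_n)$ by $\cQ(f-f_n)$, which is the one real trap here. You genuinely diverge only on the ``in particular'' part. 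The paper first applies the contraction bound to the absolute value to get $\cQ(|f|)\leq\cQ(f)$ and then cites the standard abstract computation for quadratic forms (via $f\wedge g=\tfrac12(f+g-|f-g|)$, the triangle inequality for $\cQ^{1/2}$ and the parallelogram law, as in \cite[Lemma 1.22]{KSW20}). You instead prove the two-variable Lipschitz bound $|\min\{s,t\}-\min\{s',t'\}|\leq\max\{|s-s'|,|t-t'|\}$, square it, and sum over edges — this is valid (nonnegativity makes the double sum unproblematic even when infinite) and is more elementary, needing no quadratic-form structure at all, so it would survive for non-quadratic energies. What the paper's softer route buys in exchange is that it works for any form satisfying $q(|u|)\leq q(u)$ without access to a pointwise edge representation; in particular it yields the same $\wedge$-inequality for $\cQ_0$ for free, which your edgewise summation cannot address directly since $\cQ_0$ is not defined by a pointwise sum. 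Both arguments are complete as proofs of the stated lemma.
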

\begin{proof}
 For $\cQ$ this is a direct consequence of the definition. For $\cQ_0$ we have to take into account where it is finite and  make use of the lower semicontinuity. More precisely, for $f \in \cD_0$ (for other $f$ there is nothing to show) we choose a sequence $(f_n)$ in $C_c(X)$ with $f_n \to f$ pointwise and $\cQ(f-f_n) \to 0$ (which also yields $\cQ(f_n) \to \cQ(f) = \cQ_0(f)$). Then $C(f_n) \to C(f)$ pointwise and  $C(f_n) \in C_c(X) \subseteq \cD_0$. Using $\cQ = \cQ_0$ on $\cD_0$, the pointwise lower semicontinuity of $\cQ_0$ and the inequality for $\cQ$, we obtain 
 \begin{align*}
  \cQ_0(C \circ f) &\leq \liminf_{n\to \infty} \cQ_0(C \circ f_n) =\liminf_{n\to \infty} \cQ(C \circ f_n)\\
  &\leq \liminf_{n\to \infty} \cQ(f_n) = \cQ_0(f). 
 \end{align*}

 'In particular'-part:  Applying the already established inequality to the absolute value yields $\cQ(|f|) \leq \cQ(f)$ for all $f \in C(X)$. It is well-known that for quadratic forms this inequality leads to $\cQ(f \wedge g) \leq \cQ(f) + \cQ(g)$, see e.g. the proof of \cite[Lemma 1.22]{KSW20} for the required computations.  
\end{proof}

As an application of both properties we show that nonnegative functions in $\cD_0$ can be approximated by nonnegative functions in $C_c(X)$ satisfying a certain bound. 

\begin{lemma}\label{lemma:bounded approximation}
 Let $0 \leq f \in \cD_0$. Then there exists a sequence $(f_n)$ in $C_c(X)$ with $0 \leq f_n \leq f$, $f_n \to f$ pointwise and $\cQ(f_n - f) \to 0$. 
\end{lemma}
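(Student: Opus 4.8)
The plan is to start from the definition of $\cD_0$: choose a sequence $(g_n)$ in $C_c(X)$ with $g_n \to f$ pointwise and $\cQ(f - g_n) \to 0$, and then \emph{repair} it into a sequence trapped between $0$ and $f$ by setting
$$f_n := (g_n)_+ \wedge f, \qquad (g_n)_+ := \max\{g_n,0\}.$$
Since $(g_n)_+ \in C_c(X)$ and $f \ge 0$, the function $f_n$ vanishes wherever $g_n \le 0$, so $f_n \in C_c(X)$; by construction $0 \le f_n \le f$, and $g_n \to f$ together with $f = f_+ \ge 0$ gives $(g_n)_+ \to f$ and hence $f_n \to f$ pointwise. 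The only nontrivial point is $\cQ(f_n - f) \to 0$. Here I would first note the identity $f - f_n = (f - (g_n)_+)_+$ and apply Lemma~\ref{lemma:compatibility with contractions} with the normal contraction $t \mapsto t_+$ to $f - (g_n)_+ \in \cD$, which reduces the entire claim to showing $\cQ(f - (g_n)_+) \to 0$.

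To prove this I would expand the quadratic form,
$$\cQ(f - (g_n)_+) = \cQ(f) - 2\,\cQ(f,(g_n)_+) + \cQ((g_n)_+),$$
and treat the two $n$-dependent terms separately. For the last term, Lemma~\ref{lemma:compatibility with contractions} (again with $t \mapsto t_+$) gives $\cQ((g_n)_+) \le \cQ(g_n)$, and the triangle inequality for the seminorm $\sqrt{\cQ(\cdot)}$ together with $\cQ(f - g_n) \to 0$ yields $\cQ(g_n) \to \cQ(f)$; hence $\limsup_n \cQ((g_n)_+) \le \cQ(f)$. On the other hand $(g_n)_+ \to f$ pointwise, so Lemma~\ref{lemma:pointwise semicontinuity} gives $\cQ(f) \le \liminf_n \cQ((g_n)_+)$. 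Squeezing yields $\cQ((g_n)_+) \to \cQ(f)$.

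The remaining and genuinely delicate term is the mixed one: I must show $\cQ(f,(g_n)_+) \to \cQ(f)$, equivalently $\cQ(f,\psi_n) \to 0$ for $\psi_n := (g_n)_+ - f$, which converges to $0$ pointwise while $\cQ(\psi_n)$ stays bounded by the previous step. The idea is to regard $\cQ(\cdot,\cdot)$ as the inner product of edge-gradients in the Hilbert space $H = \ell^2(X \times X, \tfrac12 b)$, so that $\cQ(f,\psi_n) = \langle \nabla f, \nabla \psi_n \rangle_H$ with $\nabla u(x,y) = u(x) - u(y)$. Since $\nabla f \in H$, for given $\varepsilon > 0$ I can pick a finite edge set $F$ with $\sum_{(x,y)\notin F} \tfrac12 b(x,y)(\nabla f(x,y))^2 < \varepsilon^2$; splitting the sum over $F$ and its complement, the finite part tends to $0$ by pointwise convergence of $\nabla \psi_n$ on finitely many edges, while the tail is controlled by Cauchy--Schwarz as $\varepsilon\,\sup_n \sqrt{\cQ(\psi_n)}$. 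Letting $n \to \infty$ and then $\varepsilon \to 0$ gives $\cQ(f,\psi_n) \to 0$, and combining the three limits in the expansion gives $\cQ(f - (g_n)_+) \to 0$, after which the contraction step of the first paragraph finishes the proof.

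I expect this mixed-term estimate to be the main obstacle: the positive-part truncation is nonlinear, so $\cQ(f-(g_n)_+) \to 0$ cannot be read off directly from $\cQ(f-g_n) \to 0$, and one genuinely needs the interplay of the contraction property of Lemma~\ref{lemma:compatibility with contractions}, the pointwise lower semicontinuity of Lemma~\ref{lemma:pointwise semicontinuity}, and the ``pointwise convergence plus bounded energy forces vanishing pairing against a fixed gradient'' principle encoded in the tail estimate.
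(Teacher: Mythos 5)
Your proof is correct, and it takes a genuinely different route from the paper after the common truncation step. The paper forms the same repaired sequence $h_n := (g_n)_+ \wedge f$ and uses Lemma~\ref{lemma:compatibility with contractions} --- but only to conclude that $(h_n)$ is $\cQ$-bounded; it then invokes the Banach--Saks theorem to extract a subsequence whose Ces\`aro means $f_N = \frac{1}{N}\sum_{j=1}^N h_{n_j}$ are $\cQ$-Cauchy, and finishes by applying the pointwise lower semicontinuity of $\cQ$ (Lemma~\ref{lemma:pointwise semicontinuity}) to get $\cQ(f - f_M) \leq \liminf_{N} \cQ(f_N - f_M)$, which is small for large $M$. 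So the paper's approximants are convex combinations of the truncations rather than the truncations themselves (convexity preserves $0 \leq f_N \leq f$, finite support and pointwise convergence, so this costs nothing for the lemma). You instead prove the strictly stronger statement that the truncated sequence itself converges in energy: the identity $f - f_n = (f - (g_n)_+)_+$ together with the contraction $t \mapsto t_+$ reduces everything to $\cQ(f - (g_n)_+) \to 0$, which you get by the classical ``convergence of norms plus weak convergence implies strong convergence'' mechanism --- $\cQ((g_n)_+) \to \cQ(f)$ by squeezing the contraction bound $\cQ((g_n)_+) \leq \cQ(g_n)$ against lower semicontinuity, and the mixed term $\cQ(f,\psi_n) \to 0$ by the tail estimate in $\ell^2\bigl(X \times X, \tfrac12 b\bigr)$ (finitely many edges handled by pointwise convergence, the tail by Cauchy--Schwarz against the uniform energy bound). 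You correctly identify this mixed term as the crux, and your treatment of it is sound; note also that your observation $f_n \in C_c(X)$, because $f \geq 0$ forces $(g_n)_+ \wedge f$ to vanish off the support of $g_n$, is exactly right and needed. Comparing the two: the Banach--Saks argument is softer and shorter, requiring only $\cQ$-boundedness and lower semicontinuity with no bilinear bookkeeping, while your argument avoids Banach--Saks entirely, stays at the level of the form itself, and yields the sharper conclusion $\cQ\bigl(f - (g_n)_+ \wedge f\bigr) \to 0$ for the original truncations, which the paper's proof does not provide.
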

\begin{proof}
We first choose a sequence $(g_n)$ in $C_c(X)$ with $g_n \to f$ pointwise and $\cQ(f - g_n) \to 0$. We then consider the sequence $h_n := (g_n)_+ \wedge f$. Then $0\leq h_n \leq f$ and $h_n \to f$ pointwise. From Lemma~\ref{lemma:compatibility with contractions} we infer 
$$\cQ(h_n) \leq \cQ((g_n)_+) + \cQ(f) \leq \cQ(g_n) + \cQ(f).$$
Since $\cQ(g_n - f) \to 0$ and $\cQ$ is a quadratic form, the sequence $(h_n)$ is $\cQ$-bounded. By the Banach-Saks theorem it has a subsequence $(h_{n_j})$ such that the sequence $(f_N)$ defined by
$$f_N =  \frac{1}{N}\sum_{j=1}^N h_{n_j}$$
is $\cQ$-Cauchy. By construction it satisfies $0\leq f_N \leq f$ and $f_N \to f$ pointwise. Moreover, the pointwise lower semicontinuity  of $\cQ$ yields
$$\cQ(f - f_M) \leq \liminf_{N \to \infty} \cQ(f_N - f_M).$$
The right side of this inequality becomes small for large $M$ because $(f_N)$ is $\cQ$-Cauchy. 
\end{proof}

  The {\em formal Laplacian} $\cL = \cL_{b,m}$ of a graph $b$ over the discrete measure space $(X,m)$ has the domain 
  $$\cF = \cF_b = \{f \in C(X) \mid \sum_{y \in X} b(x,y)|f(y)| < \infty \text{ for all } x \in X\},$$
on which it acts by
$$\cL f(x) =   \frac{1}{m(x)} \sum_{y \in X} b(x,y)(f(x) - f(y)).$$
Our assumption on the finiteness of the weighted vertex degree implies $\ell^\infty(X) \subseteq \cF$. The formal Laplacian is related to the quadratic form $\cQ$ through  the following Green formula, see e.g. \cite[Proposition~1.4]{KLW21}.

\begin{lemma}[Green's formula]\label{lemma:greens formula}
 $\cD \subseteq \cF$ and for $f \in \cD$ and $\varphi \in C_c(X)$ we have 
 $$\cQ(f,\varphi) = \sum_{x \in X} (\cL f)(x) \varphi(x) m(x).$$
\end{lemma}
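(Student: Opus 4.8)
The plan is to establish the two assertions in turn, proving the inclusion $\cD \subseteq \cF$ first, since it guarantees that the right-hand side of the formula is well-defined, and then the identity itself by a polarization-and-symmetrization argument.

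For $\cD \subseteq \cF$, I would fix $x \in X$ and split $\sum_y b(x,y)|f(y)| \leq \sum_y b(x,y)|f(y)-f(x)| + |f(x)|\deg(x)$. The second term is finite by the standing assumption $\deg(x) < \infty$. For the first I would apply Cauchy--Schwarz with the weights $b(x,y)^{1/2}$, obtaining the bound $\deg(x)^{1/2}\bigl(\sum_y b(x,y)(f(x)-f(y))^2\bigr)^{1/2}$, and note that the inner sum is at most $2\cQ(f) < \infty$. This yields $f \in \cF$ and, more usefully, the stronger pointwise estimate $\sum_y b(x,y)|f(x)-f(y)| < \infty$ for every $x$, which I will reuse below.

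The identity then follows from polarizing $\cQ$, so that $\cQ(f,\varphi) = \tfrac{1}{2}\sum_{x,y} b(x,y)(f(x)-f(y))(\varphi(x)-\varphi(y))$, splitting the factor $(\varphi(x)-\varphi(y))$ into its two summands, and swapping the names of $x$ and $y$ in the second piece using the symmetry $b(x,y) = b(y,x)$. The two resulting pieces coincide and combine to $\sum_{x,y} b(x,y)(f(x)-f(y))\varphi(x)$, which upon carrying out the sum over $y$ first is exactly $\sum_x (\cL f)(x)\varphi(x)m(x)$.

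The one point demanding care -- and the step I regard as the main, if modest, obstacle -- is the justification of this reindexing, since it requires the double series to converge absolutely before Fubini and the coordinate swap may be applied. Here the finite support of $\varphi$ is decisive: writing $K = \operatorname{supp}\varphi$ and bounding $|\varphi(x)-\varphi(y)| \leq 2\av{\varphi}_\infty\bigl(\mathbf{1}_K(x) + \mathbf{1}_K(y)\bigr)$, the series $\sum_{x,y} b(x,y)|f(x)-f(y)||\varphi(x)-\varphi(y)|$ is dominated by $2\av{\varphi}_\infty$ times a finite sum over $x \in K$, plus its symmetric counterpart, of the pointwise-finite quantities $\sum_y b(x,y)|f(x)-f(y)|$ supplied by the first part. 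Since $K$ is finite, absolute convergence follows and every rearrangement above is legitimate.
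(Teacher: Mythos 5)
Your proposal is correct and matches the standard argument: the paper itself offers no proof of this lemma, deferring to \cite[Proposition~1.4]{KLW21}, whose proof proceeds exactly as you do --- Cauchy--Schwarz against $\deg(x)<\infty$ to get $\cD\subseteq\cF$, then symmetry of $b$ plus the finite support of $\varphi$ to justify the absolute convergence and reindexing. Your explicit domination $|\varphi(x)-\varphi(y)|\leq 2\av{\varphi}_\infty\left(\mathbf{1}_K(x)+\mathbf{1}_K(y)\right)$ cleanly handles the one delicate point, so there is nothing to add.
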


%
%

\subsection{Dirichlet forms and stochastic completeness (at infinity)} \label{subsection:stochastic completeness}

 In this subsection we briefly recall the concept of stochastic completeness and stochastic completeness at infinity for graphs. Let $c \colon X \to [0,\infty)$. We consider the quadratic form $Q = Q_{b,c}$ on $\ell^2(X,m)$ with domain 
$$D(Q) = \{u  \in \cD_0 \cap \ell^2(X,m) \mid \sum_{x \in X} u(x)^2 c(x) < \infty\},$$
on which it acts by
$$Q(u) = \cQ(u) + \sum_{x\in X} u(x)^2 c(x).$$
It is a regular Dirichlet form and by Green's formula the associated nonnegative self-adjoint operator $L = L_{b,c,m}$ on $\ell^2(X,m)$ satisfies $D(L) \subseteq \cF$ with
$$Lf(x) = \cL f(x) + \frac{c(x)}{m(x)} f(x)$$
for $f \in D(L)$, $x \in X$. 

\begin{remark}
 For more details see \cite[Chapter~1]{KLW21}. The operator $L$ can be thought of being the self-adjoint realization of the Schrödinger operator $\cL + c/m$ with abstract Dirichlet boundary conditions at infinity.
\end{remark}

The subsequent discussion follows \cite[Chapter~7]{KLW21}. Since $Q$ is a Dirichlet form, the induced semigroup  $(e^{-tL})_{t > 0}$ and resolvent $((L + \beta)^{-1})_{\beta > 0}$ are positivity preserving. Hence, they can be extended to linear maps on the cone $C(X)_+ = \{f \in C(X) \mid f \geq 0\}$. More precisely, for $x\in X$ we let   
$$e^{-tL} f(x) := \sup \{e^{-tL}g(x) \mid g \in \ell^2(X,m) \text{ with } 0 \leq g \leq f\} $$
and 
$$(L + \beta)^{-1} f(x) := \sup \{(L + \beta)^{-1} g(x) \mid g \in \ell^2(X,m) \text{ with } 0 \leq g \leq f\}.$$
 These extensions may attain the value $\infty$. Since both resolvent and semigroup are continuous on $\ell^2(X,m)$, it suffices to take the $\sup$ over $g \in C_c(X)$ with $0 \leq g \leq f$. 

Since $Q$ is a Dirichlet form, the semigroup and resolvent are not only positivity preserving but also Markovian. This means that for $0 \leq f \leq 1$ we have
$$0 \leq e^{-tL}f \leq 1 \text{ and } 0 \leq  \beta (L+\beta)^{-1} f \leq 1. $$
In particular, for nonnegative bounded functions the extended resolvent and semigroup only attain finite values.

\begin{definition}[Stochastic completeness]
 Assume $c = 0$. We say that $b$ over $(X,m)$  is {\em stochastically complete} if for all $t >0$ the semigroup $(e^{-tL})_{t>0}$ satisfies the {\em   conservation property} $1 = e^{-tL}1.$
\end{definition}

 If $c \neq 0$,  then the semigroup $(e^{-tL})_{t>0}$  never satisfies the conservation property. This is why the following concept of stochastic completeness at infinity was introduced in \cite{kel12}.

\begin{definition}[Stochastic completeness at infinity]
 We say that $(b,c)$ over $(X,m)$ is {\em stochastically complete at infinity} if for all $t >0$ the semigroup $(e^{-tL})_{t>0})$   satisfies the {\em generalized conservation property}

 $$1 = e^{-tL}1 + \int_0^t e^{-sL}\frac{c}{m} ds.$$
\end{definition}

\begin{remark}
The conservation property is equivalent to $1 = \beta(L  + \beta)^{-1} 1$ for all $\beta > 0$ and the generalized conservation property is equivalent to $1 = \beta(L  + \beta)^{-1} 1 + (L+\beta)^{-1}\frac{c}{m}$ for all $\beta > 0$.
\end{remark}

Below we need the following characterizations of stochastic completeness (at infinity), which can be found in \cite[Theorem~7.2]{KLW21}.

\begin{proposition}[Some characterizations of stochastic completeness at infinity]\label{proposition:stochastic completeness}
 The following assertions are equivalent. 
 \begin{enumerate}[(i)]
  \item $(b,c)$ over $(X,m)$ is stochastically complete at infinity. 
  \item For one/all $\beta > 0$ every $0 \leq g \in \ell^\infty(X)$ with $\cL g + \frac{c}{m} g + \beta g \leq 0$ satisfies $g = 0$.
  \item   If $f \in \cF$ satisfies $f^* := \sup f \in (0,\infty)$ and $  \delta \in (0,f^*)$, then 
  $$\sup_{x \in X_\delta } \left(\cL f(x) + \frac{c(x)}{m(x)}f(x)\right)  \geq 0,$$
  where $X_\delta  = \{x \in X \mid f(x) > f^* - \delta\}$ ('Omori-Yau maximum principle').
  \end{enumerate}

\end{proposition}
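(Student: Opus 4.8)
The plan is to pivot everything on the single function
\[
 w_\beta := 1 - \beta (L+\beta)^{-1}1 - (L+\beta)^{-1}\frac{c}{m},
\]
where $(L+\beta)^{-1}$ is the resolvent extended to $C(X)_+$. By the Remark preceding the statement, assertion (i) is equivalent to $w_\beta = 0$, and a standard resolvent-identity argument shows that the resolvent form of (i) holds for one $\beta > 0$ iff for all $\beta > 0$. I would first record two structural properties of $w_\beta$: that $0 \le w_\beta \le 1$ and that it solves $\cL w_\beta + \frac{c}{m}w_\beta + \beta w_\beta = 0$ pointwise. The upper bound is the Markovian property. For the lower bound and the equation I would exhaust $X$ by finite sets $K_n \uparrow X$, let $v_n$ solve the Dirichlet problem $(\cL + \frac{c}{m} + \beta)v_n = \beta + \frac{c}{m}$ on $K_n$ with $v_n = 0$ off $K_n$, and use that $v_n \uparrow v := \beta(L+\beta)^{-1}1 + (L+\beta)^{-1}\frac{c}{m}$. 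Comparing $v_n$ with the constant $1$ on $K_n$ (which satisfies $(\cL + \frac{c}{m} + \beta)1 = \frac{c}{m} + \beta$ plus a nonnegative boundary flux) via the minimum principle for the M-matrix $L^{(K_n)} + \beta$ gives $v_n \le 1$, hence $v \le 1$ and $w_\beta \ge 0$; passing to the limit in the finite-volume equations, using $\cL 1 = 0$ and dominated convergence in the sums defining $\cL$ (legitimate since $v$ is bounded and $\deg < \infty$), yields the equation for $w_\beta$.

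For (ii) $\Rightarrow$ (i), since $w_\beta$ is nonnegative, bounded, and satisfies $\cL w_\beta + \frac{c}{m} w_\beta + \beta w_\beta = 0 \le 0$, it is admissible in (ii); thus (ii) forces $w_\beta = 0$, which is exactly the resolvent form of (i).

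For (i) $\Rightarrow$ (ii), the crux, assume $w_\beta = 0$, i.e.\ $v = 1$, and let $0 \le g \in \ell^\infty(X)$ satisfy $\cL g + \frac{c}{m} g + \beta g \le 0$. On each $K_n$ let $\phi_n$ solve $(\cL + \frac{c}{m} + \beta)\phi_n = 0$ on $K_n$ with $\phi_n = g$ off $K_n$. Since $(\cL + \frac{c}{m} + \beta)(\phi_n - g) \ge 0$ on $K_n$ and $\phi_n - g = 0$ off $K_n$, the minimum principle gives $g \le \phi_n$; comparing instead with $\|g\|_\infty$ times the solution $h_n$ of the same problem with boundary data $1$ gives $\phi_n \le \|g\|_\infty h_n$. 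A direct computation shows $1 - h_n = v_n$, so $h_n \downarrow 1 - v = w_\beta$. Combining, $g \le \|g\|_\infty h_n \to \|g\|_\infty w_\beta = 0$, whence $g = 0$. This works for every $\beta > 0$, so (i) implies (ii) for all $\beta$, and together with (ii) $\Rightarrow$ (i) it shows that property (ii) is independent of $\beta$. I expect the \emph{main obstacle} to be the careful handling of the finite-volume problems: establishing the comparison principle for $L^{(K_n)} + \beta$, identifying the monotone limit $v_n \uparrow v$ with the extended global resolvent, and justifying the passage to the limit in $\cL$; these are standard but must be assembled cleanly.

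Finally, for (ii) $\Leftrightarrow$ (iii) I would argue both directions by contraposition. Given a nontrivial $0 \le g \in \ell^\infty(X)$ with $\cL g + \frac{c}{m} g + \beta g \le 0$, put $f = g$: then $f^* = \sup g \in (0,\infty)$, and on $X_\delta = \{g > f^* - \delta\}$ one has $\cL g + \frac{c}{m} g \le -\beta g \le -\beta(f^* - \delta) < 0$, contradicting (iii); thus (iii) $\Rightarrow$ (ii). Conversely, if (iii) fails there are $f \in \cF$ with $f^* \in (0,\infty)$ and $\delta$ with $\cL f + \frac{c}{m} f \le -\varepsilon$ on $X_\delta$; setting $g = (f - (f^* - \delta))_+$ gives a nontrivial function with $0 \le g \le \delta$. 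Using $\cL g \le \cL f$ at points of $X_\delta$ (where $g > 0$), $\cL g \le 0$ where $g = 0$, and $\frac{c}{m} g \le \frac{c}{m} f$ on $X_\delta$, one checks $\cL g + \frac{c}{m} g + \beta g \le -\varepsilon + \beta\delta \le 0$ on $X_\delta$ for $\beta < \varepsilon/\delta$ and $\le 0$ elsewhere; this $g$ violates (ii) for that $\beta$, and since (ii) is $\beta$-independent by the previous step, (ii) $\Rightarrow$ (iii).
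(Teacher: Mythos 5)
The paper itself does not prove this proposition; it is imported verbatim from \cite[Theorem~7.2]{KLW21}, so there is no internal proof to compare against. Your argument is correct, and it is essentially the standard proof from \cite{kel12} and \cite{KLW21}: exhaust $X$ by finite sets, solve the finite-volume Dirichlet problems, use the discrete minimum principle (the matrices are nonsingular M-matrices since $\beta>0$), and identify the monotone limit with the extended resolvent to realize $w_\beta = 1-\beta(L+\beta)^{-1}1-(L+\beta)^{-1}\frac{c}{m}$ as the maximal nonnegative bounded solution of $(\cL+\frac{c}{m}+\beta)w=0$; the cut-off $g=(f-(f^*-\delta))_+$ for the Omori--Yau equivalence is likewise the standard device (compare the paper's own use of $h=(f-1/2)_+$ in the proof of Theorem~\ref{theorem:stochastic completeness}). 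Notably, your construction is exactly the linear specialization $\varphi=\mathrm{id}$, $W=\beta+\frac{c}{m}$ of the paper's Section~3 machinery: your $v_n$ are the $R_{K_n}$, your minimum principle is Lemma~\ref{lemma:comparison principle}, the identification $v_n\nearrow v$ is Lemma~\ref{lemma:approximation via finite subsets II}, and $w_\beta$ is the minimal supersolution of Lemma~\ref{lemma:minimality of resolvent}, so much of your finite-volume work could be compressed by citation. Two points you correctly flagged do carry real content: identifying the exhaustion limit with the $\ell^2$-resolvent of the Dirichlet form (otherwise $w_\beta$ is merely \emph{a} solution, not the one tied to assertion (i)), and the one-$\beta$-iff-all-$\beta$ step, where the resolvent identity must be extended by monotone approximation to the possibly unbounded function $\frac{c}{m}$; both are proved in \cite{kel12,KLW21}, so neither is a gap. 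Your attribution of $w_\beta\geq 0$ partly to the Markov property is a harmless slip, since your comparison $v_n\leq 1$ establishes it anyway.
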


\section{A resolvent of non-linear Schrödinger operators with bounded below potential}

In this section we introduce minimal  resolvents of the non-linear Schrödinger operator $\varphi^{-1}\circ \mathcal L + W$. Here  and throughout this section we make the following assumptions.

{\bf Assumptions:} 
\begin{itemize}
 \item $\varphi \colon \IR \to \IR$ is continuous and strictly increasing with $\varphi(0) = 0$.
 \item $W \colon X \to (0,\infty)$ with $W_0 := \inf_{x \in X} W(x) > 0$.
\end{itemize}

Since $\varphi$ is strictly increasing and continuous, its range is a possibly unbounded open interval and $\varphi$ is a bijection onto it. Hence, $\varphi^{-1}(\cL f(x))$ is well-defined as long as $\cL f(x) \in {\rm ran}\, \varphi$. We let 
$$\cF_\varphi = \{f \in \cF \mid \cL f(x) \in {\rm ran}\,\varphi \text{ for all }x \in X\}$$
be the domain of the {\em formal non-linear Schrödinger operator} $\varphi^{-1}\circ \mathcal L + W$.  For $f \in \cF_\varphi$ and $x \in X$ the operator acts by 
$$(\varphi^{-1}\circ \mathcal L + W)(f)(x) = \varphi^{-1}(\cL f(x)) + W(x) f(x).$$
Note that $\cF_\varphi$ is convex.

For nonnegative $f \in C(X)$ we construct a minimal nonnegative function  $g \in \cF_\varphi$ that solves the equation 
$$\varphi^{-1} \left( \mathcal L   g \right)  + W g   = f$$
(if such nonnegative solutions exist).  Using certain convex functionals we first show existence for finitely supported  $f$ and  extend it by using monotonicity properties. We then use these resolvents to characterize when all nonnegative bounded solutions to the semi-linear equation 
$$- \cL g = \varphi(W g)$$
vanish.

\subsection{Dirichlet resolvent of finitely supported functions}

In this subsection we use variational methods to construct the minimal solution to $\varphi^{-1} \left( \mathcal L   g \right)  + W g   = f$ for compactly supported $f$, which we call Dirichlet resolvent.

We denote by $\Phi$ the antiderivative of $2\varphi$ with $\Phi(0) = 0$, i.e.,
$$\Phi \colon \IR \to \IR,\quad \Phi(s) = \int_0^s 2 \varphi(t)dt.$$
Then $\Phi$ is nonnegative and since $\varphi$ is strictly increasing, $\Phi$ is strictly convex.

Let $f \in C_c(X)$ and $\kappa_f = \kappa_f^{\varphi,W} \colon C(X) \to [0,\infty]$,  
$$\kappa_f(u) = \sum_{x \in X} \Phi(f(x) -  W(x)u(x))\frac{m(x)}{W(x)}.$$
We  define the functional $\cE_f = \cE_f^{\varphi,W}$ by $D(\cE_f) = \{u \in \cD_0 \mid \kappa_f(u) < \infty\}$ and
$$\cE_f \colon D(\cE_f) \to [0,\infty),\quad \cE_f(u) = \cQ(u) +   \kappa_f(u). $$

\begin{remark}
Since $f$ has finite support and $\Phi(0) = 0$, we have 
$$\kappa_f(g) = \sum_{x \in X} \Phi(f(x) -  W(x)g(x))\frac{m(x)}{W(x)} < \infty$$
for all $g \in  C_c(X)$, which implies $C_c(X) \subseteq D(\cE_{f})$.  Moreover, if $u \in D(\cE_f)$ and $g \in C_c(X)$, then $u + g \in D(\cE_f)$. 

%
\end{remark}

\begin{lemma}[Pointwise lower semicontinuity]\label{lemma:lower semicontinuity of E}
 If $(u_n)$ is a sequence in $D(\cE_{f})$ with $u_n \to u$ pointwise and $\liminf_{n \to \infty} \cE_{f}(u_n) < \infty$, then $u \in D(\cE_{f})$ and 
 $$\cE_{f}(u) \leq \liminf_{n \to \infty} \cE_{f}(u_n).$$
\end{lemma}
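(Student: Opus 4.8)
The plan is to exploit the decomposition $\cE_f = \cQ_0 + \kappa_f$ into two nonnegative functionals (recall that on $\cD_0$ we have $\cQ = \cQ_0$, and every $u_n$ lies in $D(\cE_f) \subseteq \cD_0$), each of which is lower semicontinuous with respect to pointwise convergence, and then to add the two resulting inequalities. The graph-energy part is handled directly by the already-established Lemma~\ref{lemma:pointwise semicontinuity}, while the potential part $\kappa_f$ will be treated by Fatou's lemma, exactly as in the proof of the semicontinuity of $\cQ$.

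First I would reduce to a convergent subsequence. Set $C := \liminf_{n \to \infty} \cE_f(u_n) < \infty$ and pass to a subsequence $(u_{n_k})$ with $\cE_f(u_{n_k}) \to C$. Since $\cQ_0 \geq 0$ and $\kappa_f \geq 0$, both $\cQ_0(u_{n_k}) \leq \cE_f(u_{n_k})$ and $\kappa_f(u_{n_k}) \leq \cE_f(u_{n_k})$ are bounded (say by $C+1$ for large $k$). As $u_{n_k} \to u$ pointwise and $(\cQ_0(u_{n_k}))$ is bounded, Lemma~\ref{lemma:pointwise semicontinuity} yields both $u \in \cD_0$ and $\cQ_0(u) \leq \liminf_k \cQ_0(u_{n_k})$.

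For the potential part I would use that $\Phi$ is continuous (being an antiderivative of the continuous function $2\varphi$) and nonnegative, and that $m/W > 0$. Hence for each fixed $x \in X$ the summand $\Phi(f(x) - W(x)u_{n_k}(x))\,m(x)/W(x)$ converges to $\Phi(f(x) - W(x)u(x))\,m(x)/W(x)$ and is nonnegative. Fatou's lemma for the counting measure on $X$ then gives $\kappa_f(u) \leq \liminf_k \kappa_f(u_{n_k}) \leq C < \infty$. Together with $u \in \cD_0$ this shows $u \in D(\cE_f)$.

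Finally I would combine the two estimates using superadditivity of $\liminf$, i.e.\ $\liminf_k(a_k + b_k) \geq \liminf_k a_k + \liminf_k b_k$:
\begin{align*}
\cE_f(u) &= \cQ_0(u) + \kappa_f(u) \leq \liminf_k \cQ_0(u_{n_k}) + \liminf_k \kappa_f(u_{n_k}) \\
&\leq \liminf_k\bigl(\cQ_0(u_{n_k}) + \kappa_f(u_{n_k})\bigr) = \lim_k \cE_f(u_{n_k}) = C,
\end{align*}
which is the desired inequality. The argument is essentially bookkeeping once the decomposition is in place; the only point requiring minor care is to control $\cQ_0$ and $\kappa_f$ \emph{separately} along the chosen subsequence, which is why I extract a subsequence realizing the $\liminf$ rather than arguing with the full sequence. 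The continuity and nonnegativity of $\Phi$ are precisely what make Fatou's lemma applicable, so no regularity of $\varphi$ beyond the standing assumptions is needed.
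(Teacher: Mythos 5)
Your proof is correct and takes essentially the same route as the paper: the paper likewise applies Fatou's lemma (using the continuity and nonnegativity of $\Phi$) to the $\kappa_f$ part and invokes Lemma~\ref{lemma:pointwise semicontinuity} for the $\cQ_0$ part. Your subsequence extraction and the superadditivity of $\liminf$ merely spell out the bookkeeping that the paper's two-line proof leaves implicit, and both steps are sound.
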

\begin{proof}
 The continuity of $\Phi$ and Fatou's lemma imply 
 $$ \sum_{x \in X}  \Phi(f(x) -  W(x) u(x))\frac{m(x)}{W(x)} \leq \liminf_{n \to \infty} \sum_{x \in X}  \Phi(f(x) -  W(x) u_n(x))\frac{m(x)}{W(x)}.$$
 This observation combined with Lemma~\ref{lemma:pointwise semicontinuity} yields the claim.  
\end{proof}


\begin{lemma}[Approximation in $C_c(X)$]\label{lemma:cc approximation}
 For any $0 \leq u \in D(\cE_f)$ there exists a sequence $(g_n)$ in $C_c(X)$ with $0 \leq g_n \leq u$ such that 
 $$\cE_f(u) = \lim_{n\to \infty} \cE_f(g_n).$$
\end{lemma}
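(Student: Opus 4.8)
The plan is to feed the approximating sequence provided by Lemma~\ref{lemma:bounded approximation} into $\cE_f$ and to control the two summands $\cQ$ and $\kappa_f$ separately. Since $D(\cE_f) \subseteq \cD_0$, we have $0 \leq u \in \cD_0$, so Lemma~\ref{lemma:bounded approximation} supplies a sequence $(g_n)$ in $C_c(X)$ with $0 \leq g_n \leq u$, with $g_n \to u$ pointwise, and with $\cQ(g_n - u) \to 0$. As $\cQ$ is a quadratic form, this last convergence immediately yields $\cQ(g_n) \to \cQ(u)$, so the energy part is already accounted for. It therefore remains to prove $\kappa_f(g_n) \to \kappa_f(u)$.

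I would establish this convergence by dominated convergence applied to the sum defining $\kappa_f$. Pointwise convergence of the summands is clear: since $g_n \to u$ pointwise and $\Phi$ is continuous, $\Phi(f(x) - W(x) g_n(x)) \to \Phi(f(x) - W(x) u(x))$ for every $x \in X$. The crucial ingredient is an $n$-independent summable majorant, and here the constraint $0 \leq g_n \leq u$ together with the convexity of $\Phi$ does the work.

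The key estimate reads as follows. Fix $x \in X$. Since $0 \leq g_n(x) \leq u(x)$ and $W(x) > 0$, the argument $f(x) - W(x) g_n(x)$ lies in the interval $[f(x) - W(x) u(x), f(x)]$. A convex function attains its maximum over a compact interval at an endpoint, so, using $\Phi \geq 0$,

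$$\Phi(f(x) - W(x) g_n(x)) \leq \Phi(f(x) - W(x) u(x)) + \Phi(f(x))$$

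for all $n$ and all $x$. Summing against the weight $m/W$, the first term contributes $\kappa_f(u) < \infty$ because $u \in D(\cE_f)$, while the second contributes $\sum_{x \in X} \Phi(f(x)) m(x)/W(x)$, which is a finite sum since $f$ has finite support and $\Phi(0) = 0$. Thus the right-hand side provides a summable majorant independent of $n$, and dominated convergence gives $\kappa_f(g_n) \to \kappa_f(u)$.

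Combining the two convergences yields $\cE_f(g_n) = \cQ(g_n) + \kappa_f(g_n) \to \cQ(u) + \kappa_f(u) = \cE_f(u)$, which is the assertion. I expect the only mildly delicate point to be the convexity argument producing the uniform majorant, since it is exactly here that both the order constraint $0 \leq g_n \leq u$ and the finite support of $f$ are used; everything else is routine once Lemma~\ref{lemma:bounded approximation} has been invoked.
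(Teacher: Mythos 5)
Your proposal is correct and follows essentially the same route as the paper: invoke Lemma~\ref{lemma:bounded approximation}, handle the $\cQ$-part via the quadratic form property, and treat $\kappa_f$ by dominated convergence with the majorant $\Phi(f - Wu) + \Phi(f)$. The only cosmetic difference is that you justify the majorant via convexity of $\Phi$ (endpoint maximum on the interval $[f - Wu, f]$), whereas the paper uses the monotonicity $\Phi(s) \leq \Phi(t)$ for $|s| \leq |t|$ --- these yield the identical bound.
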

\begin{proof}
According to Lemma~\ref{lemma:bounded approximation} there exists a sequence $(g_n)$ in $C_c(X)$ with  $0 \leq g_n \leq u$, $g_n \to u$ pointwise and $\cQ(g_n - u) \to 0$. Since $\cQ$ is a quadratic form, we also have $\cQ(g_n) \to \cQ(u)$. The inequality $0 \leq g_n \leq u$  yields
$$f - Wu \leq f - W g_n \leq f,$$
which by the definition of $\Phi$ implies 
$$\Phi(f - W g_n) \leq \Phi(f)  + \Phi(f - Wu).$$
Since $f \in C_c(X)$ and $\kappa_f(u) < \infty$, the right side of this inequality is summable with respect to the weight $m/W$. Hence, the pointwise convergence $\Phi(f - W g_n) \to \Phi(f - Wu)$ and Lebesgue's dominated convergence theorem yield $\kappa_f(g_n) \to \kappa_f(u)$. Together with the already established $\cQ(g_n) \to \cQ(u)$ this proves the claim.
%
%
%
\end{proof}

\begin{proposition}[Existence of the Dirichlet resolvent]\label{proposition:definition non-linear resolvent}
There exists a unique function $R f = R^{\varphi,W} f \in D(\cE_{f})$ such that 
$$\cE_{f}(R f) = \inf \{\cE_{f}(u) \mid u \in D(\cE_{f})\} = \inf \{\cE_{f}(g) \mid g \in C_c(X) \}. $$
It satisfies $W_0 \av{R f}_\infty \leq \av{f}_\infty$ and $Rf \in \cF_\varphi$,  and  solves the equation 
$$\varphi^{-1} \left( \mathcal L  R f \right)  + W R f   = f.$$
\end{proposition}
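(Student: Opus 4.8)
The plan is to obtain $Rf$ by the direct method of the calculus of variations and to read off all remaining properties from convexity. Since $0 \in C_c(X) \subseteq D(\cE_f)$ and $\cE_f(0) = \kappa_f(0) = \sum_{x} \Phi(f(x))\, m(x)/W(x) < \infty$ (finite support), the infimum $I := \inf\{\cE_f(u) \mid u \in D(\cE_f)\}$ is finite and nonnegative. First I would take a minimizing sequence $(u_n)$, so that $\cQ(u_n)$ and $\kappa_f(u_n)$ are both bounded. The crucial point is compactness: because $\varphi$ is strictly increasing with $\varphi(0)=0$, the primitive $\Phi$ is strictly convex, vanishes only at $0$, and is coercive, $\Phi(s) \to \infty$ as $|s| \to \infty$. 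Hence for each fixed $x$ the estimate $\Phi(f(x) - W(x)u_n(x))\, m(x)/W(x) \le \kappa_f(u_n) \le C$ confines $f(x) - W(x) u_n(x)$, and therefore $u_n(x)$, to a bounded set. As $X$ is countable, a diagonal argument extracts a subsequence converging pointwise to some $u$. By the pointwise lower semicontinuity of $\cE_f$ (Lemma~\ref{lemma:lower semicontinuity of E}) we obtain $u \in D(\cE_f)$ and $\cE_f(u) \le \liminf_n \cE_f(u_n) = I$, so $u =: Rf$ is a minimizer.

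Uniqueness follows from strict convexity: $\cQ$ is convex, being a quadratic form, and since $\Phi$ is strictly convex and $W > 0$ the functional $\kappa_f$ is strictly convex; as $D(\cE_f)$ is convex, $\cE_f$ is strictly convex and cannot have two minimizers. For the equality of the two infima, the inclusion $C_c(X) \subseteq D(\cE_f)$ gives ``$\le$'', and the reverse inequality amounts to approximating the minimizer by finitely supported functions with $\cE_f$-convergence. For a nonnegative minimizer this is exactly Lemma~\ref{lemma:cc approximation}; in general one runs the same argument with a two-sided squeeze, choosing $g_n \in C_c(X)$ converging to $Rf$ pointwise and in $\cQ$ while lying pointwise between $0$ and $Rf$ (the median of $g_n$, $0$ and $Rf$). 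This squeeze is precisely what is needed so that, using that $s \mapsto \Phi(f(x) - W(x)s)$ has its minimum in $[-\av{f}_\infty/W_0,\av{f}_\infty/W_0]$, the integrand $\Phi(f - W g_n)$ is dominated by $\Phi(f) + \Phi(f - W\,Rf)$, whence dominated convergence yields $\kappa_f(g_n) \to \kappa_f(Rf)$.

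For the $L^\infty$-bound set $M = \av{f}_\infty < \infty$ and let $v$ be the truncation of $Rf$ to the interval $[-M/W_0, M/W_0]$. Truncation is a normal contraction, so $\cQ(v) \le \cQ(Rf)$ and $v \in \cD_0$ by Lemma~\ref{lemma:compatibility with contractions}. Pointwise, the convex function $s \mapsto \Phi(f(x) - W(x)s)$ attains its minimum at $s = f(x)/W(x)$, which lies in $[-M/W_0, M/W_0]$ since $|f(x)| \le M$ and $W(x) \ge W_0$; projecting $Rf(x)$ onto this interval moves it toward the minimizer and hence does not increase the value, so $\kappa_f(v) \le \kappa_f(Rf)$. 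Thus $\cE_f(v) \le \cE_f(Rf)$, and by uniqueness $v = Rf$, giving $\av{Rf}_\infty \le M/W_0$, which is the claimed bound $W_0\av{Rf}_\infty \le \av{f}_\infty$.

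Finally I would derive the equation by the first variation. For $\psi \in C_c(X)$ the Remark guarantees $Rf + t\psi \in D(\cE_f)$ for all $t \in \IR$, so $t \mapsto \cE_f(Rf + t\psi)$ is a finite convex function minimized at $t=0$. The $\cQ$-part contributes $2\cQ(Rf,\psi)$ to the derivative at $0$; the $\kappa_f$-part is a finite sum, which I may differentiate termwise using $\Phi' = 2\varphi$. Setting the derivative to zero and applying Green's formula (Lemma~\ref{lemma:greens formula}) yields $\sum_{x}\bigl(\cL(Rf)(x) - \varphi(f(x) - W(x)Rf(x))\bigr)\psi(x)\,m(x) = 0$ for all $\psi \in C_c(X)$, hence $\cL(Rf)(x) = \varphi(f(x) - W(x)Rf(x))$ at every $x$. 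In particular $\cL(Rf)(x) \in {\rm ran}\,\varphi$, so $Rf \in \cF_\varphi$, and applying $\varphi^{-1}$ rearranges this to $\varphi^{-1}(\cL Rf) + W\,Rf = f$. The main obstacle is the existence half: extracting a pointwise-convergent subsequence, which hinges on the coercivity of $\Phi$ forcing coordinatewise boundedness of minimizing sequences, and then matching the two infima, where keeping $\Phi(f - W g_n)$ dominated under the signed two-sided approximation is the delicate ingredient.
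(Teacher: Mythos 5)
Your proof is correct, but it deviates from the paper's argument at two points, in an interesting way. For existence, the paper does not use coercivity of $\Phi$ at all: it first shows that the truncation $C(t) = (t\wedge K)\vee(-K)$ with $K = \av{f}_\infty/W_0$ does not increase $\cE_f$ (via the pointwise estimate $|t - W(x)C(s)| \leq |t - W(x)s|$ for $|t| \leq \av{f}_\infty$, together with $\Phi(s)\leq\Phi(t)$ for $|s|\leq|t|$), then replaces the minimizing sequence by its truncations; the resulting uniform bound $|u_n|\leq K$ makes the diagonal extraction immediate and hands over the bound $W_0\av{Rf}_\infty \leq \av{f}_\infty$ to the limit for free. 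You instead extract pointwise compactness from the coercivity $\Phi(s)\to\infty$ as $|s|\to\infty$ (which does hold here: for $s\geq 1$, $\Phi(s)\geq \Phi(1)+2\varphi(1)(s-1)$, and symmetrically for $s\to-\infty$), and then recover the sup-bound a posteriori by truncating the minimizer itself and invoking uniqueness — your observation that $s\mapsto\Phi(f(x)-W(x)s)$ is minimized at $f(x)/W(x)\in[-\av{f}_\infty/W_0,\av{f}_\infty/W_0]$ is essentially the paper's contraction estimate in convex-analytic clothing. Both routes are sound; the paper's is more economical (one estimate does double duty), while yours separates compactness from the a priori bound and so would survive in settings where the contraction trick on the sequence is awkward. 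Second, on the equality of the two infima you are actually more careful than the paper: the paper cites Lemma~\ref{lemma:cc approximation}, which is stated only for nonnegative $u\in D(\cE_f)$, whereas for signed $f\in C_c(X)$ the minimizer need not be nonnegative; your two-sided squeeze via the median of $g_n$, $0$ and $Rf$ (with $\Phi(f-Wg_n)\leq\max\{\Phi(f),\Phi(f-W\,Rf)\}$ by convexity, and a Banach--Saks step as in Lemma~\ref{lemma:bounded approximation}) closes this small gap. Your Euler--Lagrange derivation with general $\psi\in C_c(X)$ is equivalent to the paper's perturbation by $h\delta_x$, the termwise differentiation being justified exactly because the perturbation has finite support and $\Phi\in C^1$.
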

\begin{proof}
The strict convexity of $\Phi$ and the convexity of $\cQ$ imply the strict convexity of $\cE_{f}$. Hence,  if a minimizer $Rf$ exists, it is unique. That taking the infimum over $D(\cE_f)$ and over $C_c(X)$ yields the same result follows from Lemma~\ref{lemma:cc approximation}.

To prove existence we first show that functions in a minimizing sequence can be chosen to satisfy a uniform pointwise bound.  Let $K = \av{f}_\infty/W_0$ and consider the normal contraction $C \colon \IR \to \IR, C(t) = (t \wedge K)\vee(-K)$. The definition of $\Phi$ implies $\Phi(s) \leq \Phi(t)$ if $|s| \leq |t|$. Moreover, if $|t| \leq \av{f}_\infty$, then for all $x \in X$ we have
\begin{align*}
 |t  - W(x) C(s)| &= W(x) |t/W(x) - C(s)| = W(x) |C(t/W(x)) - C(s)| \\
 &\leq   |t - W(x) s|.  
\end{align*}
Combining both inequalities we obtain 
$$\Phi(f(x) - W(x) C(u(x))) \leq \Phi(f(x) - W(x)  u(x))$$
for $u \in C(X)$ and $x \in X$. 

Since $\calQ$ on $\calD_0$ is compatible with normal contractions, see Lemma~\ref{lemma:compatibility with contractions}, we arrive at $C \circ u \in D(\cE_{f})$ and $\cE_{f}(C \circ u) \leq \cE_{f}(u)$ for any $u \in D(\cE_{f})$. 

Let $(u_n)$ be a minimizing sequence for $\cE_{f}$. By the contraction property we proved above we can assume without loss of generality $u_n = C \circ u_n$, i.e., $|u_n| \leq K = \av{f}_\infty/W_0$. With this at hand we can assume without loss of generality that there exists $Rf \in C(X)$ with $u_n \to  R f$ pointwise. Then $\av{R f}_\infty \leq \av{f}_\infty/W_0$ and the lower semicontinuity of $\cE_{f}$ with respect to pointwise convergence yields $ R f \in D(\cE_{f})$ and
$$\cE_{f}( R f) \leq \liminf_{n \to \infty} \cE_{f}(u_n) =  \inf \{\cE_{f}(u) \mid u \in D(\cE_{f})\}.$$

To prove  that the minimizer  satisfies the claimed equation we recall  $Rf + g \in D(\cE_{f})$ for any $g \in C_c(X)$.  For $h \in \IR$ and $x \in X$ this implies 
$$\cE_{f}(Rf + h \delta_x) \geq \cE_{f}(Rf).$$
Here, $\delta_x(x) = 1$ and $\delta_x(y) = 0$ if $x \neq y$.  Comparing the expressions on both sides of this inequality and using that $\calQ$ is a quadratic form we obtain 
\begin{align*}
 &2h \cQ (Rf,\delta_x) + h^2 \calQ(\delta_x)\\
 &\geq \frac{m(x)}{W(x)}  \left(\Phi(f(x) - W(x) Rf(x)) - \Phi(f(x) - W(x) (Rf(x) + h)   \right)
%
%
\end{align*}
Dividing by $2h$, letting $h \to 0\pm$ and using $\Phi' = 2\varphi$ yields 
$$\cQ (Rf,\delta_x) = m(x) \varphi(f(x) - W(x) R f(x)). $$
Dividing by $m(x)$ and using Green's formula Lemma~\ref{lemma:greens formula}, we infer $\cL Rf (x) =  \varphi(f(x) - W(x) Rf(x))$, which shows the claim. 
\end{proof}

In order to establish further properties of $Rf$ we approximate it via resolvents on finite sets, where we can employ a comparison principle (see below). To this end, for $U \subseteq X$ and $f \in C_c(X)$ we define the convex functional $\cE_{f,U} = \cE_{f,U}^{\varphi,W}$ as the restriction of $\cE_f$ to $D(\cE_{f,U}) = \{u \in D(\cE_f) \mid {\rm supp}\, u \subseteq U\}.$ The same arguments as in the proof of Proposition~\ref{proposition:definition non-linear resolvent} show that it has a unique minimizer $R_U f = R_U^{\varphi,W} f \in D(\cE_{f,U})$, which satisfies $W_0 \av{R_U f}_\infty \leq \av{f}_\infty$ and $R_U f \in \cF_\varphi$, and solves the non-local Dirichlet problem
$$\begin{cases}
   \varphi^{-1} \left( \mathcal L  R_U f \right)  + W R_U f   = f &\text{on }U\\
   R_U f = 0 &\text{on } X \setminus U
  \end{cases}.
$$
\begin{definition}[Dirichlet resolvent]
We call $Rf$ (respectively $R_U f$) the {\em Dirichlet resolvent} (respectively the {\em Dirichlet resolvent on $U$}) of $f$ with respect to the operator $\varphi^{-1} \circ \mathcal L + W$. 
\end{definition}

\begin{remark}[Neumann resolvent]
 If we only wanted to construct solutions $u$ to the non-linear equation $\varphi^{-1}(\mathcal L u) + W u =f$, instead of  $\cE_f$, which is defined on a subset of $\cD_0$, we could have considered the functional 
 $$\widetilde{\cE}_f(u) = \cQ(u) + \kappa_f(u),   $$
 on the domain $D(\widetilde{\cE}_f) = \{u \in \cD \mid \kappa_f(u) < \infty\}$. It also has a unique minimizer $\widetilde{R}f$ that solves the mentioned  equation. The  function $\widetilde R f$ is the {\em Neumann resolvent} of $f$ with respect to the operator $\varphi^{-1} \circ \mathcal L + W$. The names Dirichlet resolvent and Neumann resolvent come from the observation that they correspond to solutions with abstract 'Dirichlet boundary conditions' and 'Neumann boundary conditions', respectively. This analogy can be best seen for the Dirichlet resolvent on a subset $U$. 
 
 We use the Dirichlet resolvent because it can be approximated by resolvents on finite subsets and turns out to be the minimal solution. This is discussed next. 
\end{remark}

%

%

%
%

The main tool for studying properties of the resolvent is the following comparison principle.  

\begin{lemma}[Comparison principle - I] \label{lemma:comparison principle}
 Let $u,v \in  \cF_\varphi$ and assume that for some $U \subseteq X$  they satisfy the following conditions: 
  \begin{enumerate}[(a)]
   \item   $\phi\inv(\cL u) + W u \geq \phi\inv(\cL v) + Wv$ on $U$.
   \item $u-v$ attains a minimum on $U$.
   \item   $u \geq v$ on $X \setminus U$.
  \end{enumerate}
 Then $u \geq v$ on $X$.
\end{lemma}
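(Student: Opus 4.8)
The plan is to run a discrete minimum principle and argue by contradiction. I would set $w = u - v$; since $\cF_\varphi \subseteq \cF$ and $\cF$ is a vector space, $w \in \cF$, so the sum defining $\cL w(x) = \cL u(x) - \cL v(x)$ converges absolutely at every vertex. The goal is $w \geq 0$ on all of $X$, and by (c) it suffices to prove $w \geq 0$ on $U$.

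First I would invoke (b) to fix $x_0 \in U$ with $w(x_0) = \min_U w$. If $w(x_0) \geq 0$ then $w \geq 0$ on $U$, which together with (c) already gives $u \geq v$; so from now on I would assume $w(x_0) < 0$ and aim for a contradiction. The key step is to upgrade $x_0$ to a \emph{global} minimum of $w$: for $y \in U$ minimality on $U$ gives $w(y) \geq w(x_0)$, while for $y \in X \setminus U$ assumption (c) gives $w(y) \geq 0 > w(x_0)$. Hence $w(x_0) \leq w(y)$ for every $y \in X$, and since $b(x_0,\cdot) \geq 0$,
$$\cL w(x_0) = \frac{1}{m(x_0)} \sum_{y \in X} b(x_0,y)\big(w(x_0) - w(y)\big) \leq 0,$$
so that $\cL u(x_0) \leq \cL v(x_0)$.

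Then I would extract a contradiction from (a). Evaluating (a) at $x_0$ and rearranging gives
$$\varphi\inv(\cL u(x_0)) - \varphi\inv(\cL v(x_0)) \geq W(x_0)\big(v(x_0) - u(x_0)\big) = -W(x_0)\,w(x_0) > 0,$$
where positivity uses $W(x_0) > 0$ and $w(x_0) < 0$. Since $\varphi\inv$ is strictly increasing, this forces $\cL u(x_0) > \cL v(x_0)$, contradicting the inequality obtained from the minimum principle. Therefore $w(x_0) \geq 0$, and with (c) we conclude $u \geq v$ on $X$.

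I expect the only delicate point to be the passage from the minimum on $U$ provided by (b) to a genuine global minimum of $w$: this is exactly where the negativity of $w(x_0)$ and the boundary comparison (c) on $X \setminus U$ are combined, and it is what keeps the sign of $\cL w(x_0)$ under control even when $U$ is infinite. The hypothesis $u, v \in \cF_\varphi$ enters only to guarantee that $\cL w(x_0)$ splits as $\cL u(x_0) - \cL v(x_0)$ with absolutely convergent defining sums.
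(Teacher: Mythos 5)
Your proof is correct and takes essentially the same approach as the paper's: argue by contradiction at a negative minimum $x_0$ of $w=u-v$ on $U$, combine assumption (a) with $W(x_0)>0$ and the strict monotonicity of $\varphi$ to control the sign of $\cL w(x_0)$, and use (c) to rule it out. The only difference is ordering — the paper first derives $\cL w(x_0)>0$ from (a) and then finds a neighbor $y\sim x_0$ with $w(y)<w(x_0)<0$ that must lie in $X\setminus U$, contradicting (c), whereas you use (c) first to upgrade the minimum on $U$ to a global minimum (so $\cL w(x_0)\leq 0$) and then contradict the strict inequality from (a); this is the same discrete minimum-principle argument.
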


\begin{proof}
 Let $g:= u-v$ attain its minimum on $U$ at $x \in U$.  If $g(x) < 0$, then  assumption (a)  and $W(x) > 0$  imply
  $$ \phi\inv(\cL u(x)) - \phi\inv(\cL v(x)) \geq  - W(x) g(x) > 0.$$
 The strict monotonicity of $\phi$ yields 
 $$\frac{1}{m(x)} \sum_{y \in X} b(x,y) (g(x) - g(y)) = \cL g(x) =  \cL u(x) - \cL v(x)  > 0.$$
 This implies the existence of $y \sim x$ such that $g(y) < g(x) < 0$. Since $g$ attains its minimum on $U$ in $x$, we infer $y \in X \setminus U$. But this implies $g(y) \geq 0$ by assumption (c), which is a contradiction. Hence, $u - v = g \geq 0$ on $U$. 
\end{proof}

As a corollary we also note the following version of the comparison principle, which compares different nonlinearities and potentials, but only holds under a further nonnegativity assumption. 

\begin{corollary}[Comparison principle - II] \label{coro:comparison principle}
 Let $\psi \colon \IR \to \IR$ be strictly increasing, continuous with $\psi (0) = 0$ and $\psi(t) \leq \varphi(t)$ for $t \geq 0$. Let $V \colon X \to (0,\infty)$ with $V \geq W$. Let $u \in \cF_\varphi$ and $v \in  \cF_\psi$ and assume that for some $U \subseteq X$ they satisfy the following conditions: 
  \begin{enumerate}[(a)]
   \item   $\phi\inv(\cL u) + W u \geq \psi\inv(\cL v) + Vv$ on $U$.
   \item $u-v$ attains a minimum on $U$.
   \item   $u \geq v$ on $X \setminus U$.
   \item  $\cL v \geq 0$ on $U$ and $v \geq 0$ on $U$.
  \end{enumerate}
Then $u \geq v$ on $X$.
\end{corollary}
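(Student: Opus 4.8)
The plan is to follow the proof of the Comparison principle~\ref{lemma:comparison principle} almost verbatim, the only difference being the bookkeeping needed to compare the two nonlinearities and the two potentials at the point where the minimum is attained. As before, I would set $g := u - v$, let $x \in U$ be a point where $g$ attains its minimum on $U$ (which exists by assumption (b)), and argue by contradiction: assuming $g(x) < 0$, the goal is to deduce $\cL g(x) > 0$, which as in Lemma~\ref{lemma:comparison principle} forces a neighbour $y \sim x$ with $g(y) < g(x) < 0$, necessarily in $X \setminus U$, so that assumption (c) gives $g(y) \geq 0$, a contradiction. Thus $g \geq 0$ on $U$, and together with (c) this yields $u \geq v$ on all of $X$.

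The key auxiliary observation is that the hypothesis $\psi(t) \leq \varphi(t)$ for $t \geq 0$ reverses for the inverse functions on the nonnegative part of the range: for $s \geq 0$ with $s \in {\rm ran}\,\psi$ one has $\varphi^{-1}(s) \leq \psi^{-1}(s)$. Indeed, writing $a := \psi^{-1}(s) \geq 0$ (using $\psi(0) = 0$ and monotonicity), we get $\varphi(a) \geq \psi(a) = s \geq 0$, so $s \in {\rm ran}\,\varphi$ by the intermediate value theorem and $\varphi^{-1}(s) \leq a = \psi^{-1}(s)$ since $\varphi^{-1}$ is increasing. Then, at the minimum point, I would rewrite assumption (a) as
$$\varphi^{-1}(\cL u(x)) - \psi^{-1}(\cL v(x)) \geq V(x)v(x) - W(x)u(x),$$
and substitute $u(x) = v(x) + g(x)$ on the right-hand side to obtain
$$V(x)v(x) - W(x)u(x) = (V(x) - W(x))v(x) - W(x)g(x).$$
Here is exactly where assumption (d) and the inequality $V \geq W$ enter: $(V(x) - W(x))v(x) \geq 0$ because $V \geq W$ and $v(x) \geq 0$, while $-W(x)g(x) > 0$ because $W(x) > 0$ and $g(x) < 0$; hence the right-hand side is strictly positive and $\varphi^{-1}(\cL u(x)) > \psi^{-1}(\cL v(x))$.

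It remains to pass from this inequality for the inverses to the inequality $\cL u(x) > \cL v(x)$. Since $\cL v(x) \geq 0$ (assumption (d)) and $\psi^{-1}(0) = 0$, we have $\psi^{-1}(\cL v(x)) \geq 0$, and the auxiliary observation applied to $s = \cL v(x)$ gives $\varphi^{-1}(\cL v(x)) \leq \psi^{-1}(\cL v(x)) < \varphi^{-1}(\cL u(x))$. The strict monotonicity of $\varphi$ then yields $\cL v(x) < \cL u(x)$, i.e.\ $\cL g(x) > 0$, which is precisely the input needed for the neighbour argument above. I expect the only genuinely delicate point to be making sure the nonnegativity hypotheses (d) are invoked in the right places --- first to control the sign of $(V - W)v$ and of $\psi^{-1}(\cL v)$, and then to legitimately compare $\varphi^{-1}$ with $\psi^{-1}$ at $\cL v(x)$; without $v \geq 0$ and $\cL v \geq 0$ on $U$ this comparison of the two nonlinearities breaks down, which is why the corollary, unlike Lemma~\ref{lemma:comparison principle}, requires them.
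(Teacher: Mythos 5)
Your proposal is correct and follows essentially the same route as the paper: both proofs hinge on the identical key observation that $s \in {\rm ran}\,\psi$ with $s \geq 0$ forces $s \in {\rm ran}\,\varphi$ and $\varphi^{-1}(s) \leq \psi^{-1}(s)$ (applied to $s = \cL v(x) \geq 0$), combined with $Vv \geq Wv$ from $v \geq 0$ and $V \geq W$. The only difference is packaging: the paper assembles these facts into the pointwise chain $\varphi^{-1}(\cL u) + Wu \geq \psi^{-1}(\cL v) + Vv \geq \varphi^{-1}(\cL v) + Wv$ on $U$ (noting $v \in \cF_\varphi$) and then invokes Lemma~\ref{lemma:comparison principle} as a black box, whereas you re-run that lemma's minimum-point contradiction inline with the two nonlinearities --- mathematically the same argument.
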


\begin{proof}
 Since  $\psi,\varphi$ are continuous and strictly increasing, their ranges are open intervals. Hence, the inequality $\psi(t) \leq \varphi(t)$, $t \geq 0$, yields 
 $$\ran \psi \cap [0,\infty) \subseteq \ran \varphi \cap [0,\infty) $$
 and $\psi\inv (s) \geq \varphi \inv (s)$ for $s \geq 0$ with $s \in \ran \psi$. Using these observations and the assumptions $\cL v \geq 0$ and $v \in \cF_\psi$, shows $v \in \cF_\varphi$. Moreover, using (a) and $\cL v \geq 0$, $v \geq 0$, $V \geq W$, we obtain 
 $$\phi\inv(\cL u) + W u \geq \psi\inv(\cL v) + Vv \geq \phi\inv(\cL v) + W v. $$
 With this at hand the statement follows with the assumptions (b) and (c) and the previous lemma. 
\end{proof}

\begin{lemma}[Domain monotonicity]
 Let $U,V \subseteq X$ be finite with $U \subseteq V$. Furthermore, let $f,g \in C_c(X)$ with $0 \leq f \leq g$. Then 
 $$R_U f \leq R_V g. $$
 In particular, $R_V g \geq 0$. 
\end{lemma}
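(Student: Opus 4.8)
The plan is to derive both assertions from the first comparison principle, \cref{lemma:comparison principle}, establishing them in sequence. The key observation is that the two claims are naturally coupled: verifying the monotonicity $R_U f \le R_V g$ with the comparison principle on the set $U$ requires control of $R_V g$ on $X \setminus U$, which is exactly the nonnegativity asserted in the ``in particular'' part. I would therefore prove $R_V g \ge 0$ first and then bootstrap to the main inequality. Both Dirichlet resolvents lie in $\cF_\varphi$ by construction, and since $U$ and $V$ are finite, every real-valued function attains its minimum on them, so hypothesis (b) of the comparison principle will hold automatically throughout.

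First I would establish $R_V g \ge 0$. The zero function lies in $\cF_\varphi$, because $\cL 0 = 0$ lies in the range of $\varphi$ and $\varphi^{-1}(0) = 0$ (using $\varphi(0) = 0$), and it satisfies $\varphi^{-1}(\cL 0) + W \cdot 0 = 0$. I apply the comparison principle with $u = R_V g$, $v = 0$ and the set $V$. Hypothesis (a) holds because on $V$ the defining equation gives $\varphi^{-1}(\cL R_V g) + W R_V g = g \ge 0$, using $g \ge 0$; hypothesis (b) holds since $V$ is finite; and hypothesis (c) holds because $R_V g = 0$ on $X \setminus V$ by construction. The comparison principle then yields $R_V g \ge 0$ on all of $X$.

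Next I would prove $R_U f \le R_V g$ by applying the comparison principle with $u = R_V g$, $v = R_U f$ and the smaller set $U$. For (a), since $U \subseteq V$ the defining equation for $R_V g$ holds on $U$, so there $\varphi^{-1}(\cL R_V g) + W R_V g = g \ge f = \varphi^{-1}(\cL R_U f) + W R_U f$, the inequality being $f \le g$. Hypothesis (b) holds because $U$ is finite. For (c), on $X \setminus U$ the Dirichlet resolvent $R_U f$ vanishes by construction, while $R_V g \ge 0$ by the previous step, so $R_V g \ge R_U f$ there. The comparison principle delivers $R_V g \ge R_U f$ on $X$, which is the assertion.

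The only genuine obstacle is the logical ordering. A naive attempt to prove the monotonicity directly stalls because hypothesis (c) on $X \setminus U$ presupposes $R_V g \ge 0$; recognizing this apparent circularity and breaking it by first comparing $R_V g$ against the zero function is the crux. Once the nonnegativity of $R_V g$ is in hand, both applications of the comparison principle are routine: the defining Dirichlet problems make hypotheses (a) and (c) transparent, and finiteness of the sets makes (b) immediate.
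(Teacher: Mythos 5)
Your proposal is correct and follows essentially the same route as the paper: both first establish $R_V g \geq 0$ by applying the comparison principle (Lemma~\ref{lemma:comparison principle}) with $u = R_V g$, $v = 0$ on the finite set $V$, and then compare $R_V g$ with $R_U f$ on $U$, using the just-proved nonnegativity to verify the boundary condition on $X \setminus U$. Your explicit remark on the logical ordering (nonnegativity must come first to unblock hypothesis (c)) is exactly the implicit structure of the paper's argument, so nothing further is needed.
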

\begin{proof}
We first show $R_V g \geq 0$.  By definition $R_V g = 0$ on $X \setminus V$ and $\varphi^{-1} \left( \mathcal L  R_V g \right)  + W R_V g   = g \geq 0$ on $V$. Moreover, $R_V g$ attains  its minimum on the finite set $V$. Hence, the comparison principle Lemma~\ref{lemma:comparison principle} applied to $u = R_Vg$ and $v = 0$ yields $R_V g \geq 0$. 

The other inequality also follows from the comparison principle. More precisely, on $U$ we have 
$$\varphi^{-1} \left( \mathcal L  R_V g \right)  + W R_V g   = g \geq f = \varphi^{-1} \left( \mathcal L  R_U f \right)  + W R_U f $$
and on $X \setminus U$ we have
$$R_V g \geq 0 = R_U f.$$
Since $R_V g - R_U f$ attains its minimum on the finite set $U$, we infer $R_U f \leq R_V g$ from the comparison principle Lemma~\ref{lemma:comparison principle}.
\end{proof}

\begin{lemma}[Approximation via finite subsets - I]\label{lemma:approximation via finite subsets I}
Let $0 \leq f \in C_c(X)$ and let $(K_n)$ be finite subsets in $X$ with $K_n \nearrow X$.  Then $R_{K_n} f \nearrow R f$ pointwise. 
\end{lemma}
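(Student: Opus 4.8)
The plan is to show that the sequence $(R_{K_n} f)$ is monotone increasing and uniformly bounded, so that it converges pointwise to some limit $h$, and then to identify $h$ with $Rf$ by proving that $h$ is the (unique) minimizer of $\cE_f$. The monotonicity and boundedness are routine consequences of the earlier lemmas; the substance of the argument lies in the identification step.

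First I would establish the monotonicity and a pointwise bound. Since the $K_n$ are finite with $K_n \subseteq K_{n+1}$, the domain monotonicity lemma applied with $U = K_n$, $V = K_{n+1}$ and $g = f$ gives $0 \leq R_{K_n} f \leq R_{K_{n+1}} f$, so $(R_{K_n} f)$ is nonnegative and increasing. As each Dirichlet resolvent on a finite set satisfies $W_0 \av{R_{K_n} f}_\infty \leq \av{f}_\infty$, the sequence is bounded by $\av{f}_\infty/W_0$ and hence converges pointwise to some $h \in C(X)$ with $0 \leq h \leq \av{f}_\infty/W_0$. It then remains to prove $h = Rf$.

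To identify $h$, I would sandwich the minimal energies. Because $\cE_{f,K_n}$ is the restriction of $\cE_f$ to the increasing domains $D(\cE_{f,K_n})$, the minimal values $\cE_f(R_{K_n} f) = \cE_{f,K_n}(R_{K_n} f)$ form a decreasing sequence, bounded below by $\inf_{D(\cE_f)}\cE_f = \cE_f(Rf)$. On the one hand, the pointwise lower semicontinuity of $\cE_f$ (Lemma~\ref{lemma:lower semicontinuity of E}), applied to $R_{K_n} f \to h$ with these bounded energies, gives $h \in D(\cE_f)$ and
$$\cE_f(h) \leq \lim_{n \to \infty} \cE_f(R_{K_n} f).$$
On the other hand, I would bound this limit from above by compactly supported competitors: given $\varepsilon > 0$, Proposition~\ref{proposition:definition non-linear resolvent} provides $g \in C_c(X)$ with $\cE_f(g) \leq \cE_f(Rf) + \varepsilon$, and for $n$ large enough that ${\rm supp}\, g \subseteq K_n$ we have $g \in D(\cE_{f,K_n})$, so $\cE_{f,K_n}(R_{K_n} f) \leq \cE_f(g) \leq \cE_f(Rf) + \varepsilon$. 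Letting $n \to \infty$ and then $\varepsilon \to 0$ yields $\lim_{n} \cE_f(R_{K_n} f) \leq \cE_f(Rf)$.

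Combining the two estimates gives $\cE_f(Rf) \leq \cE_f(h) \leq \lim_n \cE_f(R_{K_n} f) \leq \cE_f(Rf)$, so $\cE_f(h) = \inf_{D(\cE_f)} \cE_f$; that is, $h$ is a minimizer of $\cE_f$. By uniqueness of the minimizer (from strict convexity, Proposition~\ref{proposition:definition non-linear resolvent}) we conclude $h = Rf$, which proves $R_{K_n} f \nearrow Rf$ pointwise. I expect the main obstacle to be the correct handling of this energy limit: lower semicontinuity alone only yields a lower bound on $\cE_f(h)$, and the crux is to pair it with the upper bound coming from finitely supported competitors, thereby forcing $h$ to be the global minimizer rather than merely some function of comparable energy. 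The monotonicity and uniform bound, by contrast, follow directly from the domain monotonicity lemma and the $\av{\cdot}_\infty$-estimate already recorded for $R_U f$.
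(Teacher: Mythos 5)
Your proposal is correct and follows essentially the same route as the paper's proof: domain monotonicity plus the uniform bound $W_0 \av{R_{K_n} f}_\infty \leq \av{f}_\infty$ give the pointwise limit, and the identification combines pointwise lower semicontinuity of $\cE_f$ with compactly supported competitors (eventually supported in $K_n$) to show the limit minimizes $\cE_f$, whence it equals $Rf$ by uniqueness. The only cosmetic difference is your explicit $\varepsilon$-bookkeeping where the paper directly deduces $\cE_f(F) \leq \cE_f(g)$ for all $g \in C_c(X)$ and invokes the fact from Proposition~\ref{proposition:definition non-linear resolvent} that the infimum over $C_c(X)$ equals the infimum over $D(\cE_f)$.
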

\begin{proof}
 The domain monotonicity implies that $R_{K_n} f$ is increasing. Since $W_0 \av{R_{K_n} f}_\infty \leq \av{f}_\infty$, the limit 
 $$F := \lim_{n \to \infty} R_{K_n} f$$
 exists pointwise. We  show that it is the minimizer of $\cE_f$.  
 
 The domains $D(\cE_{f,K_n})$ are increasing. Therefore,  $\cE_f(R_{K_n} f) = \cE_{f,K_n}(R_{K_n} f)$ is decreasing and bounded by $\cE_{f,K_1}(R_{K_1} f)$.  The pointwise lower semicontinuity of $\cE_f$, Lemma~\ref{lemma:lower semicontinuity of E},  yields $F \in D(\cE_f)$ and 
 $$\cE_f(F) \leq \liminf_{n\to \infty} \cE_f(R_{K_n} f) = \liminf_{n\to \infty} \cE_{f,K_n}(R_{K_n} f).  $$
 For $g \in C_c(X)$ there exists $N \in \IN$ such that ${\rm supp}\, g \subseteq K_n$ for each $n \geq N$. Since $R_{K_n}f$ is the minimizer of $\cE_{f,K_n}$, this implies 
 $$\cE_{f,K_n}(R_{K_n}f) \leq \cE_{f,K_n}(g) = \cE_f (g)$$
 for $n \geq N$. Combined with the previous inequality, we infer $\cE_f(F) \leq \cE_f(g)$ for all $g \in C_c(X)$. According to Proposition~\ref{proposition:definition non-linear resolvent}, this implies $F = Rf$.  
\end{proof}

\begin{corollary}[Monotonicity of the resolvent]\label{coro:monotonicity resolvent}
 Let $f,g \in C_c(X)$ with $0 \leq f \leq g$. Then $0 \leq Rf \leq Rg$.
\end{corollary}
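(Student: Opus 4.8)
The plan is to deduce the corollary directly from the two monotonicity facts already at hand: the domain monotonicity lemma (which packages the comparison principle) and the approximation lemma \cref{lemma:approximation via finite subsets I}. Since all the genuine analytic content — the comparison principle and the variational construction — is already established, this reduces to a routine passage to the limit along a finite exhaustion.

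Concretely, I would fix an increasing sequence of finite subsets $(K_n)$ with $K_n \nearrow X$. For each fixed $n$ I apply the domain monotonicity lemma with $U = V = K_n$ and the hypothesis $0 \leq f \leq g$. This gives, at every vertex,
$$
0 \leq R_{K_n} f \leq R_{K_n} g,
$$
where the nonnegativity is exactly the ``in particular'' clause of that lemma (or, equivalently, follows by comparing against $R_{K_n} 0 = 0$, noting that the zero function minimizes $\cE_{0,K_n}$ because $\cE_{0,K_n} \geq 0$ and $\cE_{0,K_n}(0) = 0$).

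Next I invoke \cref{lemma:approximation via finite subsets I} separately for $f$ and for $g$, both of which are nonnegative and compactly supported. This yields the pointwise monotone convergences $R_{K_n} f \nearrow R f$ and $R_{K_n} g \nearrow R g$. Letting $n \to \infty$ in the chain of inequalities above, and using that pointwise limits preserve $\leq$, I obtain $0 \leq R f \leq R g$, as claimed.

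I do not expect any real obstacle here: the only thing to be careful about is that the domain monotonicity lemma is stated for finite sets and for a common exhaustion, so I must take the \emph{same} sequence $(K_n)$ for both $f$ and $g$ and invoke the approximation lemma for each separately, rather than trying to compare $Rf$ and $Rg$ directly at the continuum level. All the difficulty has already been absorbed into the comparison principle (\cref{lemma:comparison principle}) underlying domain monotonicity, so the corollary is essentially a limiting bookkeeping step.
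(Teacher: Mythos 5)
Your proof is correct and is exactly the argument the paper intends: its proof is literally ``combine the previous two lemmas,'' namely domain monotonicity (applied with $U = V = K_n$) and the approximation via finite subsets, followed by passing to the pointwise limit. Your write-up simply makes the implicit bookkeeping explicit, including correctly noting that a common exhaustion $(K_n)$ must be used for both $f$ and $g$.
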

\begin{proof}
 This follows from combining the previous two lemmas. 
\end{proof}

We established the monotonicity of the resolvent as a consequence of the comparison principle Lemma~\ref{lemma:comparison principle} and the approximation via finite sets. Using the  same argument with the slightly more general comparison principle Corollary~\ref{coro:comparison principle}, we obtain the following result on domination of the resolvents.

\begin{proposition}[Domination]\label{proposition:domination prelim}
Let $\psi \colon \IR \to \IR$ be strictly increasing, continuous with $\psi (0) = 0$ and $\psi(t) \leq \varphi(t)$ for $t \geq 0$. Let $V \colon X \to (0,\infty)$ with $V \geq W$. For all $0 \leq f \in C_c(X)$ we have 
$$0 \leq R^{\psi,V} f \leq R^{\varphi,W}f.$$
\end{proposition}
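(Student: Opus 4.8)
The plan is to reproduce, with the stronger comparison principle Corollary~\ref{coro:comparison principle} in place of Lemma~\ref{lemma:comparison principle}, the two-step scheme that produced the monotonicity of the resolvent in Corollary~\ref{coro:monotonicity resolvent}: first establish the domination for Dirichlet resolvents on finite sets, and then pass to the limit along an exhaustion by means of the finite-subset approximation Lemma~\ref{lemma:approximation via finite subsets I}. The nonnegativity of $R^{\psi,V}f$ is not new; it is obtained exactly as in the domain monotonicity lemma by comparing the finite Dirichlet resolvents with $0$.

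For the finite step I would fix finite sets $U \subseteq K$ and compare $v := R_U^{\psi,V} f$ with $u := R_K^{\varphi,W} f$ using Corollary~\ref{coro:comparison principle} on $U$. Hypothesis (a) holds because on $U$ both functions solve their defining equations, so there $\varphi^{-1}(\cL u) + Wu = f = \psi^{-1}(\cL v) + Vv$; hypothesis (b) is automatic since $U$ is finite; hypothesis (c) holds because the Dirichlet resolvent $v$ vanishes off $U$ while $u \ge 0$; and the nonnegativity of $v$ supplies the second half of (d). The corollary then yields $u \ge v$ on $X$, that is $R_U^{\psi,V} f \le R_K^{\varphi,W} f$. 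Taking $U = K = K_n$ with $K_n \nearrow X$ and applying Lemma~\ref{lemma:approximation via finite subsets I} to both operators, the monotone limits $R_{K_n}^{\psi,V} f \nearrow R^{\psi,V} f$ and $R_{K_n}^{\varphi,W} f \nearrow R^{\varphi,W} f$ upgrade the finite estimates to $0 \le R^{\psi,V} f \le R^{\varphi,W} f$.

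The one genuinely new point, and the step I expect to be the main obstacle, is the remaining half of hypothesis (d): the sign condition $\cL v \ge 0$ on $U$. In the monotonicity argument the two nonlinearities coincide, the function $v$ is automatically a subsolution for the same operator, and no sign condition is needed; here the nonlinearities differ and Corollary~\ref{coro:comparison principle} becomes available only where $\cL v \ge 0$. The mechanism by which the ordering of the data enters is seen by running the comparison directly: at a vertex $x_0 \in U$ where $u - v$ attains its minimum over $U$, a negative minimum would force $v(x_0) > u(x_0) \ge 0$ and, since then $u-v$ lies above this value everywhere (using (c) off $U$), $\cL u(x_0) \le \cL v(x_0)$; feeding this into the two equations and using $V \ge W$ together with $v(x_0) > u(x_0) \ge 0$ gives $\varphi^{-1}(\cL u(x_0)) > \psi^{-1}(\cL v(x_0))$, and the inequality $\psi^{-1} \ge \varphi^{-1}$ coming from $\psi \le \varphi$ then yields $\varphi^{-1}(\cL u(x_0)) > \varphi^{-1}(\cL v(x_0))$, contradicting $\cL u(x_0) \le \cL v(x_0)$. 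The care required is precisely to guarantee that $\psi^{-1} \ge \varphi^{-1}$ holds at the values $\cL v$ actually attains at such critical vertices, which is exactly what the sign condition in (d) encodes and the only place where the comparison between $\psi$ and $\varphi$ must be invoked.
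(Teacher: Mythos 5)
Your two-step scheme is precisely the paper's own argument (the paper proves the proposition by remarking that the monotonicity argument runs with Corollary~\ref{coro:comparison principle} in place of Lemma~\ref{lemma:comparison principle}), and everything up to hypothesis (d) is fine. But the proof does not close: the sign condition $\cL v \geq 0$ on $U$ for $v = R_U^{\psi,V}f$, which you correctly isolate as the main obstacle, is never verified --- and it is in fact false for general $0 \leq f \in C_c(X)$. On $U$ one has $\cL v = \psi(f - Vv)$, so the condition amounts to $Vv \leq f$ on $U$, which fails wherever $f$ vanishes but $v > 0$ (the resolvent spreads mass); and it can fail exactly at the critical vertex, where $\psi$ and $\varphi$ are then compared at negative arguments, on which they are unrelated. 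Concretely, take $X = \{a,b\}$ with $b(a,b) = 1$, $m \equiv 1$, $V = W \equiv 1$, $f = \delta_a$, $\psi = \mathrm{id}$, and $\varphi(t) = t$ for $t \geq 0$, $\varphi(t) = Mt$ for $t \leq 0$ with $M > 1$. Then $\psi \leq \varphi$ on $[0,\infty)$, but solving the (strictly convex) minimization problems gives $R^{\psi,V}f = (2/3,\,1/3)$ while $R^{\varphi,W}f = \bigl((1+M)/(1+2M),\,1/(1+2M)\bigr)$, so $R^{\psi,V}f \not\leq R^{\varphi,W}f$; moreover at the minimum vertex $b$ of $u - v$ one has $\cL v(b) = -1/3 < 0$, so even the localized version of (d) you hoped for fails. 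Thus with $\psi \leq \varphi$ assumed only on $[0,\infty)$ the statement cannot be proved for arbitrary $0 \leq f \in C_c(X)$ --- neither along your lines nor by the paper's one-line proof, which has the same defect. The paper itself signals this: the remark after Theorem~\ref{theorem:uniqueness of bounded subsolutions} notes that for general $f \geq 0$ the resolvent $Rf$ depends on $\varphi|_{(-\infty,0)}$, whereas the proposition applied in both directions with $\psi = \varphi$ on $[0,\infty)$ would force the opposite.

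There are two natural repairs, and you should state which one you adopt. First, if one strengthens the hypothesis to $\psi(t) \leq \varphi(t)$ for \emph{all} $t \in \IR$, then hypothesis (d) becomes unnecessary: at a negative minimum $x_0 \in U$ of $g = u - v$ (which is global, since $g \geq 0$ off $U$) one has $v(x_0) > u(x_0) \geq 0$, hence $t := f(x_0) - V(x_0)v(x_0) < s := f(x_0) - W(x_0)u(x_0)$, and therefore $\cL g(x_0) = \varphi(s) - \psi(t) \geq \varphi(s) - \varphi(t) > 0$, contradicting $\cL g(x_0) \leq 0$ at a global minimum; your exhaustion step via Lemma~\ref{lemma:approximation via finite subsets I} then finishes the proof exactly as you describe. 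Second, if one keeps the hypotheses as printed, the conclusion survives for the data the paper actually needs later (Case~1 of Proposition~\ref{proposition:stability of stochastic completeness at infinity}), namely $f = \alpha W 1_{K_n}$ with $V = W$: comparing $v = R_{K_n}^{\psi,W}(\alpha W 1_{K_n})$ with the constant function $\alpha$ via Lemma~\ref{lemma:comparison principle} yields $v \leq \alpha$, hence $f - Wv = W(\alpha - v) \geq 0$ on $K_n$, so $\cL v \geq 0$ on $K_n$ and hypothesis (d) of Corollary~\ref{coro:comparison principle} does hold; for these $f$ your two-step scheme then goes through verbatim.
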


\subsection{The extended Dirichlet resolvent}

The monotonicity of the Dirichlet resolvent on finitely supported functions allows us to extend it to all positive functions. In this subsection we show that this extension yields minimal solutions to the equation $\varphi^{-1}(\cL g) + W g = f$.

\begin{definition}[Extended Dirichlet resolvent]
 For $0 \leq f \in C(X)$ we define the {\em extended Dirichlet resolvent} by
$$Rf\colon X \to [0,\infty], \quad Rf(x) = \sup \{Rg(x) \mid g \in C_c(X) \text{ with } 0 \leq g \leq f\}. $$
\end{definition}

As a first consequence of this definition we note that the domination we observed at the end of the last section extends to the extended resolvent.

\begin{proposition}[Domination] \label{proposition:domination}
Let $\psi \colon \IR \to \IR$ be strictly increasing, continuous with $\psi (0) = 0$ and $\psi(t) \leq \varphi(t)$ for $t \geq 0$. Let $V \colon X \to (0,\infty)$ with $V \geq W$. For all $0 \leq f \in C(X)$ we have 
$$0 \leq R^{\psi,V} f \leq R^{\varphi,W} f.$$
\end{proposition}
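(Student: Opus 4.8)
The plan is to deduce the domination inequality for the extended resolvents directly from the corresponding statement for compactly supported functions, namely Proposition~\ref{proposition:domination prelim}, together with the defining suprema of the extended resolvents. The essential observation is that both $R^{\psi,V}$ and $R^{\varphi,W}$ are defined on $0 \leq f \in C(X)$ as suprema over the \emph{same} index set, namely $\{g \in C_c(X) \mid 0 \leq g \leq f\}$, and that the comparison between the two operators is already known termwise on this index set.

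Concretely, I would argue as follows. Fix $0 \leq f \in C(X)$ and $x \in X$, and let $g \in C_c(X)$ with $0 \leq g \leq f$ be arbitrary. Since $g$ is finitely supported and nonnegative, Proposition~\ref{proposition:domination prelim} applies and gives
$$0 \leq R^{\psi,V} g(x) \leq R^{\varphi,W} g(x).$$
Because $g$ is an admissible competitor in the supremum defining $R^{\varphi,W} f(x)$, the right-hand side is bounded above by $R^{\varphi,W} f(x)$, so that $R^{\psi,V} g(x) \leq R^{\varphi,W} f(x)$ for every such $g$. Taking the supremum over all admissible $g$ on the left yields
$$R^{\psi,V} f(x) = \sup\{R^{\psi,V} g(x) \mid g \in C_c(X),\ 0 \leq g \leq f\} \leq R^{\varphi,W} f(x),$$
which is the claimed inequality. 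The nonnegativity $0 \leq R^{\psi,V} f$ is immediate since each $R^{\psi,V} g \geq 0$, or alternatively from the convention that the empty-competitor value $g = 0$ contributes $R^{\psi,V} 0 = 0$.

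I do not anticipate a genuine obstacle here: this is essentially a monotonicity-of-suprema argument, and all the analytic work (the comparison principle and the variational construction of $Rg$ for compactly supported $g$) has already been done in Proposition~\ref{proposition:domination prelim}. The only point requiring a little care is that the two extended resolvents are indexed over the identical set of competitors, so that a pointwise bound holding competitor-by-competitor passes to the suprema; this is exactly what makes the one-line reduction legitimate. One should also note that the inequality holds even when $R^{\varphi,W} f(x) = \infty$, in which case it is trivial, and that no issue arises from the possibly infinite values since the bound is proved pointwise before any supremum is taken.
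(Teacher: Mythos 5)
Your proof is correct and is exactly the paper's argument: the paper proves Proposition~\ref{proposition:domination} in one line by combining Proposition~\ref{proposition:domination prelim} with the definition of the extended resolvent, and your write-up simply makes that supremum-over-competitors reduction explicit. No gaps; the pointwise, competitor-by-competitor comparison before taking suprema is precisely what the paper intends.
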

\begin{proof}
 This follows directly from Proposition~\ref{proposition:domination prelim} and the definition of the extended resolvent. 
\end{proof}

\begin{lemma}[Approximation via finite subsets - II]\label{lemma:approximation via finite subsets II}
Let $0 \leq f \in C(X)$ and let $(K_n)$ be finite subsets in $X$ with $K_n \nearrow X$.  Then $R_{K_n} (f1_{K_n})  \nearrow R f$ pointwise. 
\end{lemma}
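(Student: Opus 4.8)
The plan is to prove two things separately: that the sequence $R_{K_n}(f 1_{K_n})$ increases pointwise, and that its pointwise limit equals $Rf$. The whole argument rests on the observation that each $f 1_{K_n}$ lies in $C_c(X)$ (since $K_n$ is finite), that $K_n \subseteq K_{n+1}$ forces $0 \leq f 1_{K_n} \leq f 1_{K_{n+1}}$, and that the domain monotonicity lemma lets us compare finite-set resolvents by enlarging \emph{both} the domain and the function simultaneously, or by keeping the domain fixed. I would first record the monotonicity: applying domain monotonicity with $U = K_n \subseteq K_{n+1} = V$ and $0 \leq f 1_{K_n} \leq f 1_{K_{n+1}}$ gives $R_{K_n}(f 1_{K_n}) \leq R_{K_{n+1}}(f 1_{K_{n+1}})$, so the sequence increases pointwise and admits a pointwise limit $F \colon X \to [0,\infty]$ (possibly taking the value $\infty$, as $f$ need not be bounded).

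For the upper bound $F \leq Rf$, fix $n$ and apply \cref{lemma:approximation via finite subsets I} to the compactly supported function $f 1_{K_n}$ to get $R_{K_m}(f 1_{K_n}) \nearrow R(f 1_{K_n})$ as $m \to \infty$; taking $m = n$ gives $R_{K_n}(f 1_{K_n}) \leq R(f 1_{K_n})$. Since $f 1_{K_n} \in C_c(X)$ with $0 \leq f 1_{K_n} \leq f$, the definition of the extended Dirichlet resolvent yields $R(f 1_{K_n}) \leq Rf$. Chaining these gives $R_{K_n}(f 1_{K_n}) \leq Rf$ for every $n$, hence $F \leq Rf$.

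The reverse inequality is the crux. Fix any $g \in C_c(X)$ with $0 \leq g \leq f$. Because $g$ has finite support, there is $N \in \IN$ with ${\rm supp}\, g \subseteq K_n$ for all $n \geq N$, and for such $n$ we have $g = g 1_{K_n} \leq f 1_{K_n}$. Applying domain monotonicity now with the \emph{common} set $U = V = K_n$ and $0 \leq g \leq f 1_{K_n}$ gives $R_{K_n} g \leq R_{K_n}(f 1_{K_n}) \leq F$ for all $n \geq N$. Letting $n \to \infty$ and using \cref{lemma:approximation via finite subsets I} for $g$ (so $R_{K_n} g \nearrow Rg$) produces $Rg \leq F$. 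Taking the supremum over all admissible $g$ and invoking the definition of the extended resolvent yields $Rf \leq F$. Combining the two inequalities gives $F = Rf$, and since the sequence was already shown to increase pointwise, we conclude $R_{K_n}(f 1_{K_n}) \nearrow Rf$.

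I expect the main obstacle to be precisely this reverse inequality. The delicate point is that the comparison $R_{K_n} g \leq R_{K_n}(f 1_{K_n})$ of two resolvents on the \emph{same} finite set only becomes available once $K_n$ has absorbed the support of $g$, so that $g \leq f 1_{K_n}$ holds everywhere; before that the two data are not ordered. Once this is set up, passing to the limit is handled cleanly by the already-established approximation result for compactly supported data, and the supremum defining the extended resolvent closes the argument.
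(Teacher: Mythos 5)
Your proof is correct and follows essentially the same route as the paper's: domain monotonicity gives both the pointwise increase and (together with Lemma~\ref{lemma:approximation via finite subsets I} and the definition of the extended resolvent) the bound $R_{K_n}(f1_{K_n}) \leq Rf$, and the reverse inequality comes from comparing with $R_{K_n}g$ once $K_n$ absorbs the support of $g$. The only difference is cosmetic: the paper handles the reverse inequality by splitting into the cases $Rf(x) < \infty$ and $Rf(x) = \infty$ with an $\varepsilon$-, respectively $C$-, argument, whereas you pass to the limit $Rg \leq F$ for each admissible $g$ and take the supremum directly, which is a slight streamlining of the same idea.
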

\begin{proof}
The approximation via finite subsets of the resolvent on finitely supported functions, Lemma~\ref{lemma:approximation via finite subsets I}, and the definition of the extended resolvent yield 
$$R_{K_n} (f1_{K_n}) \leq R (f 1_{K_n}) \leq Rf.$$
Moreover, domain monotonicity implies that $R_{K_n} (f1_{K_n})$ is increasing in $n$. 

Let $x \in X$. Case~1: $Rf(x) < \infty$. Let $\varepsilon > 0$ and choose $w \in C_c(X)$ with $0 \leq w \leq f$ and $Rf(x) \leq Rw(x) + \varepsilon$. Using Lemma~\ref{lemma:approximation via finite subsets I} and $w \leq f 1_{K_n}$ for large enough $n$,  we infer 
$$Rf(x) \leq Rw(x) + \varepsilon = \lim_{n\to \infty}  R_{K_n} w(x) + \varepsilon \leq \lim_{n\to \infty}  R_{K_n} (f1_{K_n})(x) + \varepsilon.   $$

Case 2: $Rf(x) = \infty$: Let $C \geq 0$ and choose $0 \leq w \leq f$ with $Rw(x) \geq C$. For $n$ large enough we have $f1_{K_n} \geq w$ and infer 
$$C \leq Rw(x)  =  \lim_{n\to \infty} R_{K_n} w(x) \leq \lim_{n\to \infty} R_{K_n} (f1_{K_n})(x), $$
where we use Lemma~\ref{lemma:approximation via finite subsets I} for the equality.
\end{proof}

\begin{theorem}[Minimality of the extended Dirichlet resolvent]\label{theorem:minimality resolvent}
 Let $0 \leq f \in C(X)$. 
 \begin{enumerate}[(a)]
  \item $Rf(x) < \infty$ for all $x \in X$ implies $Rf \in \cF_\varphi$ and 
  $$\varphi^{-1}(\cL Rf) + W Rf  = f.$$
  \item $Rf(x) < \infty$ for all $x \in X$ if and only if there exists $0 \leq g \in \cF_\varphi$ with 
  $$\varphi^{-1}(\cL g) + W g \geq f. $$
  In this case,  $g \geq Rf$.
 \end{enumerate}
\end{theorem}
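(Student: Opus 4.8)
The plan is to base both parts on the monotone approximation $u_n := R_{K_n}(f1_{K_n})$ along an exhaustion $K_n \nearrow X$, which by Lemma~\ref{lemma:approximation via finite subsets II} increases pointwise to $Rf$. Each $u_n$ lies in $\cF_\varphi$ and solves the non-local Dirichlet problem, so that $\cL u_n(x) = \varphi(f(x) - W(x) u_n(x))$ at every $x \in K_n$.

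For part (a) I would fix $x$ and take $n$ large enough that $x \in K_n$. Since $Rf(x) < \infty$, Lemma~\ref{lemma:approximation via finite subsets II} gives $u_n(x) \to Rf(x)$, and continuity of $\varphi$ then shows $\cL u_n(x) \to \varphi(f(x) - W(x) Rf(x))$, a finite limit. Writing $\cL u_n(x) = m(x)^{-1}\left(\deg(x) u_n(x) - \sum_y b(x,y) u_n(y)\right)$, the first summand converges to $\deg(x) Rf(x)/m(x)$, which forces $m(x)^{-1}\sum_y b(x,y) u_n(y)$ to converge to a finite limit as well. As $u_n(y) \nearrow Rf(y) \geq 0$, monotone convergence identifies this limit as $m(x)^{-1}\sum_y b(x,y) Rf(y)$, which is hence finite; thus $Rf \in \cF$. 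Passing to the limit in the equation yields $\cL Rf(x) = \varphi(f(x) - W(x) Rf(x)) \in \ran \varphi$, so $Rf \in \cF_\varphi$ and $\varphi^{-1}(\cL Rf) + W Rf = f$.

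For part (b), the forward implication is immediate from (a): if $Rf$ is everywhere finite, then $g = Rf \geq 0$ lies in $\cF_\varphi$ and satisfies $\varphi^{-1}(\cL g) + W g = f \geq f$. For the reverse implication, assume $0 \leq g \in \cF_\varphi$ with $\varphi^{-1}(\cL g) + W g \geq f$. I would compare $g$ with each $u_n$ using the comparison principle Lemma~\ref{lemma:comparison principle} on $U = K_n$: on $K_n$ one has $\varphi^{-1}(\cL g) + W g \geq f = \varphi^{-1}(\cL u_n) + W u_n$; the difference $g - u_n$ attains a minimum on the finite set $K_n$; and on $X \setminus K_n$ one has $g \geq 0 = u_n$. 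The lemma gives $g \geq u_n$ on all of $X$, and letting $n \to \infty$ yields $g \geq Rf$; in particular $Rf(x) < \infty$ everywhere, which is also the asserted domination.

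The one delicate point, and the step I expect to be the main obstacle, is the interchange of the limit with the non-local operator $\cL$ in part (a): termwise passage is not automatic because $\cL$ sums over all of $X$. The crux is that the nonlinear equation itself bounds $\cL u_n(x) = \varphi(f(x) - W(x) u_n(x))$ as soon as $Rf(x) < \infty$, and this bound together with the monotonicity $u_n \nearrow Rf$ is exactly what supplies the integrability $\sum_y b(x,y) Rf(y) < \infty$ needed to pass to the limit.
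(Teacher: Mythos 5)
Your proposal is correct and follows essentially the same route as the paper's proof: both parts rest on the exhaustion $u_n = R_{K_n}(f1_{K_n}) \nearrow Rf$ from Lemma~\ref{lemma:approximation via finite subsets II}, with monotone convergence (together with the finiteness of the limit forced by the equation and continuity of $\varphi$) supplying $\sum_y b(x,y)Rf(y) < \infty$ in (a), and the comparison principle Lemma~\ref{lemma:comparison principle} applied on each finite $K_n$ giving $g \geq u_n$ in (b). The ``delicate point'' you flag is exactly the step the paper handles the same way, so nothing is missing.
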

\begin{proof}
 (a): Choose a sequence of finite sets $(K_n)$ with $K_n \nearrow X$ and let $g_n := R_{K_n} (f1_{K_n})$. The definition of $R_{K_n}$ yields 
 $$\cL g_n = \varphi(f - W g_n) \text{ on } K_n. $$
 Spelled out, for each $x \in K_n$ this reads
 $$\frac{\deg(x)}{m(x)} g_n(x) - \frac{1}{m(x)}\sum_{y \in X} b(x,y) g_n(y) = \varphi(f(x) - W(x) g_n(x)).$$
Lemma~\ref{lemma:approximation via finite subsets II} implies $g_n \nearrow Rf$. The monotone convergence theorem yields that the left side of the previous equation converges and the right side of the equation converges because $\varphi$ is continuous. Hence, we obtain 
$$\frac{\deg(x)}{m(x)} Rf(x) - \frac{1}{m(x)}\sum_{y \in X} b(x,y) Rf(y) = \varphi(f(x) - W(x)Rf(x))$$
for all $x \in X$. In particular, this shows $\sum_{y \in X} b(x,y)Rf(y) < \infty$  for all $x \in X$ (i.e. $Rf \in \cF$) and   $\cL Rf = \varphi(f - W Rf)$. 
 
(b): If $Rf(x) < \infty$ for all $x \in X$, then using (a) we can choose $g = Rf$. 

Now suppose $0 \leq g \in \cF_\varphi$ with $\varphi^{-1}(\cL g) + W g \geq f.$ Let $g_n = R_{K_n} (f1_{K_n})$ be as in the proof of (a). Then 
$$\varphi^{-1}(\cL g) + W g \geq f = \varphi^{-1}(\cL g_n) + W g_n \text{ on } K_n. $$
Moreover, $g \geq 0 = g_n$ on $X \setminus K_n$ and $g - g_n$ attains a minimum on the finite set $K_n$. Hence, the comparison principle Lemma~\ref{lemma:comparison principle} yields $g \geq g_n$.  The approximation of Lemma~\ref{lemma:approximation via finite subsets II} yields $g \geq Rf$.
\end{proof}

\begin{corollary} \label{corollary:inequality for potential} For any $\alpha \geq 0$ we have  $R(\alpha W) \leq \alpha$.  
\end{corollary}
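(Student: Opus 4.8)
The plan is to exhibit an explicit nonnegative supersolution of the equation $\varphi^{-1}(\cL g) + W g = \alpha W$ and then appeal to the minimality statement Theorem~\ref{theorem:minimality resolvent}(b). The natural candidate is the constant function $g \equiv \alpha$, which I expect to be an exact solution: the graph Laplacian annihilates constants, so $\cL g \equiv 0$, and therefore $\varphi^{-1}(\cL g) + W g = \varphi^{-1}(0) + \alpha W = \alpha W$, where we use $\varphi(0) = 0$ (hence $\varphi^{-1}(0) = 0$).

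First I would record that $g \equiv \alpha$ is nonnegative and lies in $\cF_\varphi$. Nonnegativity is immediate from $\alpha \geq 0$. Membership in $\cF$ holds because $\sum_{y \in X} b(x,y)|g(y)| = \alpha \deg(x) < \infty$ by the standing finiteness of the weighted vertex degree, and a direct computation gives $\cL g(x) = \frac{1}{m(x)}\sum_{y \in X} b(x,y)(g(x) - g(y)) = 0$ for every $x \in X$; since $0 = \varphi(0) \in {\rm ran}\,\varphi$, this shows $\cL g(x) \in {\rm ran}\,\varphi$ for all $x$, i.e. $g \in \cF_\varphi$.

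Next I would apply Theorem~\ref{theorem:minimality resolvent}(b) with right-hand side $f = \alpha W$, which is a nonnegative element of $C(X)$ since $\alpha \geq 0$ and $W > 0$. The computation above shows $\varphi^{-1}(\cL g) + W g = \alpha W = f \geq f$, so $g$ is a nonnegative supersolution. The theorem then yields simultaneously that $R(\alpha W)(x) < \infty$ for all $x \in X$ and that such a supersolution dominates the extended resolvent, i.e. $R(\alpha W) \leq g \equiv \alpha$, which is precisely the claim.

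I expect no genuine obstacle here. The only point worth keeping in mind is that $\alpha W$ need not be bounded, but the extended Dirichlet resolvent is defined for every nonnegative $f \in C(X)$ and Theorem~\ref{theorem:minimality resolvent}(b) is stated at that level of generality, so boundedness of $\alpha W$ plays no role.
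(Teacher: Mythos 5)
Your proposal is correct and is exactly the paper's argument: the constant function $\alpha 1_X$ is an exact solution of $\varphi^{-1}(\cL g) + Wg = \alpha W$ (since $\cL$ annihilates constants and $\varphi^{-1}(0)=0$), so the minimality statement of Theorem~\ref{theorem:minimality resolvent}(b) yields $R(\alpha W) \leq \alpha$. Your added checks (membership in $\cF_\varphi$, the irrelevance of possible unboundedness of $\alpha W$) are details the paper leaves implicit but change nothing in substance.
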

\begin{proof}
 The constant function $f = \alpha 1_X$ satisfies 
 $$\varphi^{-1}(\cL f) + W f = \alpha W.$$
 With this at hand the previous theorem shows $R(\alpha W) \leq f = \alpha 1_X$. 
\end{proof}
\begin{remark}
\begin{enumerate}[(a)]
 \item Recall the situation of Subsection~\ref{subsection:stochastic completeness}. In this case, if we let $\varphi = {\rm id}_\IR$ and $W = \beta + \frac{c}{m}$, then for $0 \leq f \in C(X)$ we have $Rf = (L + \beta)^{-1}f$. Here, $(L+\beta)^{-1}f$ is the extended resolvent discussed in Subsection~\ref{subsection:stochastic completeness}. In particular, we obtain
 $$R (\beta + \frac{c}{m}) = \beta (L+\beta)^{-1} 1 + (L+\beta)^{-1} \frac{c}{m}. $$
 This shows that  $R(\alpha W)$ is the non-linear analogue of the quantity appearing in the generalized conservation property. In particular, the previous theorem and its corollary are non-linear versions of \cite[Theorem~11 and Proposition~6.1]{kel12}.
 \item For general $\varphi$ the resolvent $R$ is not homogenous. It is therefore quite remarkable that the  inequality $R(\alpha W) \leq \alpha$ holds for all $\alpha \geq 0$.  
\end{enumerate}
\end{remark}

\section{Stochastic completeness at infinity and uniqueness of bounded (super)harmonic functions}

In this section we characterize triviality of bounded  solutions (nonpositive bounded supersolutions) to the equation 
$$\varphi^{-1}(\cL g) + Wg = 0$$
with the help of the non-linear resolvent introduced in the previous sections. The most important observation is that resolvents yield minimal bounded below (super)solutions to it, which we discuss in the following lemma.   

%
%
%

\begin{lemma}[Minimal (super)solution]\label{lemma:minimality of resolvent}
   Let $\alpha \geq 0$. The function $w = R(\alpha W) - \alpha$ satisfies $-\alpha \leq w \leq 0$, $w \in \cF_\varphi$ and $\varphi \inv (\cL w) + Ww = 0$.  If $h \in \cF_\varphi$ satisfies  $\varphi^{-1} (\cL h) + W h \geq 0$ and $h \geq -\alpha$, then $w \leq h$. 
\end{lemma}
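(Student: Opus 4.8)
The plan is to derive every assertion from the resolvent identity of Theorem~\ref{theorem:minimality resolvent} together with the elementary fact that $\cL$ annihilates constants, so that shifting by the constant $\alpha$ moves freely between $R(\alpha W)$ and $w$.

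First I would record the two-sided bound. The extended resolvent is nonnegative by construction, so $R(\alpha W) \geq 0$, while Corollary~\ref{corollary:inequality for potential} gives $R(\alpha W) \leq \alpha$. Subtracting $\alpha$ yields $-\alpha \leq w \leq 0$ at once; in particular $w$ is bounded, hence $w \in \cF$ because $\ell^\infty(X) \subseteq \cF$. Since $R(\alpha W)$ is finite at every vertex, Theorem~\ref{theorem:minimality resolvent}(a) applies with $f = \alpha W$ and gives $R(\alpha W) \in \cF_\varphi$ together with the identity
$$\varphi^{-1}(\cL R(\alpha W)) + W R(\alpha W) = \alpha W.$$

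Next I would transfer this identity to $w$. Both $R(\alpha W)$ and the constant $\alpha$ lie in $\cF$, and $\cL$ is linear with $\cL \alpha = 0$, so $\cL w = \cL R(\alpha W)$ pointwise. Since membership in $\cF_\varphi$ is governed only by the values of $\cL$, this already shows $w \in \cF_\varphi$ ($\cL w(x) = \cL R(\alpha W)(x) \in \mathrm{ran}\,\varphi$ for every $x$). Substituting $R(\alpha W) = w + \alpha$ into the displayed identity and using $\cL w = \cL R(\alpha W)$ gives
$$\varphi^{-1}(\cL w) = \alpha W - W R(\alpha W) = -W\bigl(R(\alpha W) - \alpha\bigr) = -W w,$$
which is exactly $\varphi^{-1}(\cL w) + W w = 0$.

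For the minimality statement the key idea is again to shift by the constant $\alpha$. Suppose $h \in \cF_\varphi$ satisfies $\varphi^{-1}(\cL h) + W h \geq 0$ and $h \geq -\alpha$, and set $\tilde h := h + \alpha$. Then $\tilde h \geq 0$, and since $\cL \tilde h = \cL h$ we have $\tilde h \in \cF_\varphi$ and
$$\varphi^{-1}(\cL \tilde h) + W \tilde h = \varphi^{-1}(\cL h) + W h + \alpha W \geq \alpha W.$$
Thus $\tilde h$ is a nonnegative supersolution in $\cF_\varphi$ for the right-hand side $f = \alpha W$; as we already know $R(\alpha W) < \infty$ everywhere, the `in this case' conclusion of Theorem~\ref{theorem:minimality resolvent}(b) yields $\tilde h \geq R(\alpha W)$, i.e. $h \geq R(\alpha W) - \alpha = w$.

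I do not expect a genuine obstacle: the whole argument reduces to the resolvent equation from Theorem~\ref{theorem:minimality resolvent} and the observation that $\cL$ is unchanged under adding a constant. The only points deserving care are the finiteness of $R(\alpha W)$ (needed both for the bound and for invoking part~(b)), which comes from Corollary~\ref{corollary:inequality for potential}, and the verification that $w$ and $\tilde h$ lie in $\cF_\varphi$, which holds precisely because they differ from $R(\alpha W)$ and $h$ by a constant that $\cL$ ignores.
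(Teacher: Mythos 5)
Your proposal is correct and takes essentially the same route as the paper: the two-sided bound via Corollary~\ref{corollary:inequality for potential} and Corollary~\ref{coro:monotonicity resolvent}, the identity $\varphi^{-1}(\cL w) + Ww = 0$ from Theorem~\ref{theorem:minimality resolvent}(a) together with $\cL$ annihilating constants, and minimality by applying Theorem~\ref{theorem:minimality resolvent}(b) to the shifted function $h + \alpha$, which is exactly the paper's $g = \alpha + h$. Your explicit remarks on why $w$ and $h+\alpha$ stay in $\cF_\varphi$ only spell out details the paper leaves implicit; there is no gap.
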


\begin{proof}
By Corollary~\ref{corollary:inequality for potential} $w$ is nonpositive and by Corollary~\ref{coro:monotonicity resolvent} it is bounded from below by $-\alpha$. Moreover, Theorem~\ref{theorem:minimality resolvent} shows $w \in \cF_\varphi$ and
 $$\varphi^{-1}(\cL w) + W w = \varphi^{-1}(\cL R(\alpha W)) + W R(\alpha W) - \alpha W  = \alpha W - \alpha W = 0. $$

 Let $g = \alpha + h \geq 0$. Then  $g \in \cF_\varphi$ with 
$$\varphi^{-1}(\cL g) + W g = \varphi^{-1} (\cL h) + W h + \alpha W \geq \alpha W. $$
With this at hand Theorem~\ref{theorem:minimality resolvent} yields $R(\alpha W) \leq g = \alpha  + h$.
\end{proof}

With this at hand we obtain the following characterization of an $\ell^\infty$-Liouville type theorem for the operator $\varphi^{-1}\circ \cL  + W$.

\begin{theorem}[Uniqueness of bounded supersolutions]\label{theorem:uniqueness of bounded subsolutions}
The following assertions are equivalent. 
 \begin{enumerate}[(i)]
  \item $R(\alpha W) = \alpha$ for all $\alpha \geq 0$.
 
   \item Any $0 \geq g \in \ell^\infty(X) \cap \cF_\varphi$ with $\varphi^{-1}(\cL g) + Wg = 0$ satisfies $g = 0$.
  
   \item Any $0 \geq g \in \ell^\infty(X) \cap \cF_\varphi$ with $\varphi^{-1}(\cL g) + Wg \geq 0$ satisfies $g = 0$.
  
  \item Any $g \in \ell^\infty(X) \cap \cF_\varphi$ with $\varphi^{-1}(\cL g) + Wg \geq 0$ satisfies $g \geq 0$.
  
  \item Any $0 \leq f \in \ell^\infty(X)$ with $-\cL f = \varphi(Wf)$ satisfies $f = 0$. 
 \end{enumerate}
%
 %
\end{theorem}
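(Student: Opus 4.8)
The plan is to establish the cyclic chain of implications $(i) \Rightarrow (iv) \Rightarrow (iii) \Rightarrow (ii) \Rightarrow (i)$ and then to close the loop by showing $(ii) \Leftrightarrow (v)$ via the substitution $f = -g$. The engine for the whole argument is the minimal supersolution Lemma~\ref{lemma:minimality of resolvent}: for each $\alpha \geq 0$ the function $w = R(\alpha W) - \alpha$ is itself a nonpositive, bounded element of $\cF_\varphi$ solving $\varphi^{-1}(\cL w) + W w = 0$, and it lies below every $h \in \cF_\varphi$ with $\varphi^{-1}(\cL h) + W h \geq 0$ and $h \geq -\alpha$. Condition $(i)$ is exactly the statement that this $w$ vanishes for every $\alpha$, so most of the implications reduce to choosing the right $\alpha$ and reading off the lemma, together with the boundedness $-\alpha \leq R(\alpha W) - \alpha \leq 0$ already supplied by Corollaries~\ref{corollary:inequality for potential} and \ref{coro:monotonicity resolvent}.

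For $(i) \Rightarrow (iv)$ I would take $g \in \ell^\infty(X) \cap \cF_\varphi$ with $\varphi^{-1}(\cL g) + W g \geq 0$ and set $\alpha = \av{g}_\infty$, so that $g \geq -\alpha$. The minimality part of Lemma~\ref{lemma:minimality of resolvent} applied with $h = g$ then gives $R(\alpha W) - \alpha \leq g$, and $(i)$ forces the left-hand side to be $0$, whence $g \geq 0$. The implications $(iv) \Rightarrow (iii)$ and $(iii) \Rightarrow (ii)$ are immediate: under $(iv)$ a nonpositive supersolution is simultaneously nonnegative, hence zero, and a solution of the homogeneous equation is in particular a supersolution. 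For $(ii) \Rightarrow (i)$ I would fix $\alpha \geq 0$ and apply $(ii)$ to $w = R(\alpha W) - \alpha$, which by the lemma is a nonpositive bounded member of $\cF_\varphi$ with $\varphi^{-1}(\cL w) + W w = 0$; thus $w = 0$, i.e.\ $R(\alpha W) = \alpha$.

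The equivalence $(ii) \Leftrightarrow (v)$ is a pure change of variables. Rewriting $\varphi^{-1}(\cL g) + W g = 0$ as $\cL g = \varphi(-W g)$ and putting $f = -g$ turns it into $-\cL f = \varphi(W f)$, while $g \leq 0$ corresponds to $f \geq 0$. The only point requiring care is the domain bookkeeping: starting from $f \geq 0$ bounded with $-\cL f = \varphi(W f)$, the function $g = -f$ lies in $\cF_\varphi$ because $\cL g = \varphi(W f) \in {\rm ran}\,\varphi$, and conversely $f = -g \in \ell^\infty(X) \subseteq \cF$ automatically, so $\cL f$ is well defined and the identity $\cL g = -\cL f$ transports the equation. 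Hence the hypotheses and conclusions of $(ii)$ and $(v)$ translate into each other exactly.

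I do not expect a genuine analytic obstacle; the difficulty is entirely organizational. The two places to get right are (a) the choice $\alpha = \av{g}_\infty$ in $(i) \Rightarrow (iv)$, which is what makes the minimality statement applicable to an arbitrary bounded supersolution, and (b) verifying that membership in $\cF_\varphi$, i.e.\ $\cL g \in {\rm ran}\,\varphi$, survives the sign change in the substitution for $(ii) \Leftrightarrow (v)$. Everything else follows mechanically from Lemma~\ref{lemma:minimality of resolvent}.
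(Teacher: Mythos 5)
Your proposal is correct and matches the paper's own proof essentially step for step: the paper likewise proves (i)~$\Rightarrow$~(iv) via $\alpha = \av{g}_\infty$ and the minimality statement of Lemma~\ref{lemma:minimality of resolvent}, gets (iv)~$\Rightarrow$~(iii)~$\Rightarrow$~(ii) trivially, closes the cycle with (ii)~$\Rightarrow$~(i) by applying (ii) to $w = R(\alpha W) - \alpha$, and handles (ii)~$\Leftrightarrow$~(v) by the same substitution $f = -g$ with exactly the domain bookkeeping you describe ($\cL g = \varphi(-Wg) \in \operatorname{ran}\varphi$ gives membership in $\cF_\varphi$, while $\ell^\infty(X) \subseteq \cF$ covers the converse). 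No gaps.
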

\begin{proof}

(i) $\Rightarrow$ (iv): Let $\alpha = \av{g}_\infty$. Then $g \geq -\alpha$ and Lemma~\ref{lemma:minimality of resolvent} yields $R(\alpha W) - \alpha  \leq g$. Since by assumption $R(\alpha W) = \alpha$, this leads to $g \geq 0$.

(iv) $\Rightarrow$ (iii) $\Rightarrow$ (ii): This is trivial. 

(ii) $\Rightarrow$ (i): Let $\alpha \geq 0$ and consider the function $w =  R(\alpha W) - \alpha$. As seen in Lemma~\ref{lemma:minimality of resolvent} it satisfies $0 \geq w \in \ell^\infty(X) \cap \cF_\varphi$ and $\varphi \inv (\cL w) + Ww = 0$.  Hence, (ii) implies $w = 0$.  
 
(ii) $\Leftrightarrow$ (v): It is readily verified that $ g \in  \cF_\varphi$ and $\varphi^{-1}(\cL g) + Wg = 0$ if and only if $g \in \cF$ and $\cL g = \varphi(-Wg)$. Since also $\ell^\infty(X) \subseteq \cF$, the equivalence of (ii) and (v) follows immediately. 
%
%
%
%
%
%
%
%
%
\end{proof}

\begin{remark} \begin{enumerate}[(a)]

\item It is quite remarkable that the conditions (ii), (iii) and (v) in the previous theorem  only depend on $\varphi|_{[0,\infty)}$ (and hence also the equivalent conditions (i) and (iv)).  The reason is that $R(\alpha W)$, which yields minimal supersolutions, does only depend on $\varphi|_{[0,\infty)}$ (this can be seen by carefully following the construction of the resolvent). Note however, that for general $f \geq 0$ the resolvent $R f$ may depend on $\varphi|_{(-\infty,0)}$.

\item  For $W = 1$  the equation $-\cL f = \varphi(f)$ in (v) is a discrete version of the semi-linear elliptic equation considered in \cite{gri20,gri23}. We discuss its connection to stochastic completeness in the next section.   

\item   Assume that $\varphi \colon \IR \to \IR$ is surjective (i.e. $\lim_{t \to \pm \infty} \varphi(t) = \pm \infty$). Then $\cF_\varphi = \cF$.  Since also $\ell^\infty(X) \subseteq \cF$, in this case, the previous theorem simply deals with bounded (super)solutions of  $\varphi^{-1}(\cL g) + Wg = 0$.

                \item  As already discussed in the remark following Theorem~\ref{theorem:minimality resolvent},  if we let $\varphi = {\rm id}_\IR$ and $W = \beta + \frac{c}{m}$, then   
                $$R (\beta + \frac{c}{m}) = \beta (L+\beta)^{-1} 1 + (L+\beta)^{-1} \frac{c}{m}. $$
                Hence, for fixed $\beta > 0$ the identity $R (\beta + \frac{c}{m}) = 1$ is equivalent to $\beta (L+\beta)^{-1} 1 + (L+\beta)^{-1} \frac{c}{m} = 1$. By the resolvent identity this in turn is equivalent to $\beta' (L+\beta')^{-1} 1 + (L+\beta')^{-1} \frac{c}{m} = 1$ for all $\beta' > 0$, which is the generalized conservation property discussed in Subsection~\ref{subsection:stochastic completeness}. 
                
                These observations show that (iv) is a non-linear version of the generalized conservation property and hence of stochastic completeness at infinity. For general $\varphi$ and $W$ the resolvent is non-linear. Hence, it is not sufficient to require $R(W) = 1$ (as the previous computations in the linear case suggest) but we need $R(\alpha W) = \alpha$ for all $\alpha \geq 0$.
               \end{enumerate}
\end{remark}

Based on these observations we adopt the definition of stochastic completeness at infinity to the non-linear setting.

\begin{definition}[Stochastic completeness at infinity - non-linear version] \label{definition:nonlinear stochastic completeness}
 We say that a graph $b$ over  $(X,m)$ with $W$ and $\varphi$ as above is {\em stochastically complete at infinity} if it satisfies the {\em non-linear generalized conservation property}
 $$R^{\varphi,W}(\alpha W) = \alpha, \text{ all } \alpha > 0.$$
\end{definition}

For later purposes we note a stability property of stochastic completeness at infinity.

\begin{proposition}[Stability of stochastic completeness at infinity]\label{proposition:stability of stochastic completeness at infinity}
Let $\psi \colon \IR \to \IR$ be strictly increasing, continuous with $\psi (0) = 0$ and $\psi(t) \leq \varphi(t)$ for $t \geq 0$. Let $V \colon X \to (0,\infty)$ with $\inf_{x \in X} V(x) > 0$ and $W \geq V$. If $b$ over $(X,m)$ with $V$ and $\psi$ is stochastically complete at infinity, then it is stochastically complete at infinity with $W$ and $\varphi$. 
\end{proposition}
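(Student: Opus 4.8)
The plan is to fix $\alpha > 0$ and prove $R^{\varphi,W}(\alpha W) = \alpha$, which by Definition~\ref{definition:nonlinear stochastic completeness} is exactly stochastic completeness at infinity for $(\varphi,W)$. By Corollary~\ref{corollary:inequality for potential} we already have $R^{\varphi,W}(\alpha W) \leq \alpha$, so only the reverse inequality is at stake. I would set $h := R^{\varphi,W}(\alpha W)$, so that $0 \leq h \leq \alpha$, and since $h$ is finite everywhere, Theorem~\ref{theorem:minimality resolvent}~(a) gives $h \in \cF_\varphi$ with $\varphi\inv(\cL h) + W h = \alpha W$, equivalently $\cL h = \varphi(\alpha W - W h) = \varphi(W(\alpha - h))$.

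The key step is to convert this into a \emph{range-free} supersolution inequality for the pair $(\psi,V)$. Since $0 \leq \alpha - h$ and $W \geq V$, we have $W(\alpha - h) \geq V(\alpha - h) \geq 0$ pointwise; using that $\varphi$ is increasing and that $\psi \leq \varphi$ on $[0,\infty)$, I obtain
\[ \cL h = \varphi(W(\alpha - h)) \geq \varphi(V(\alpha - h)) \geq \psi(V(\alpha - h)) = \psi(\alpha V - V h) \]
on all of $X$. The point of this formulation is that $\psi$ is applied to a nonnegative argument, so the inequality makes sense without requiring $\cL h$ to lie in $\ran \psi$.

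Next I would compare $h$ with the finite Dirichlet resolvents of the $(\psi,V)$-operator. Fixing finite sets $K_n \nearrow X$ and setting $\tilde g_n := R^{\psi,V}_{K_n}((\alpha V)1_{K_n})$, these satisfy $\cL \tilde g_n = \psi(\alpha V - V\tilde g_n)$ on $K_n$, vanish off $K_n$, are nonnegative, and by the hypothesised stochastic completeness of $(\psi,V)$ together with Lemma~\ref{lemma:approximation via finite subsets II} obey $\tilde g_n \nearrow R^{\psi,V}(\alpha V) = \alpha$. I claim $h \geq \tilde g_n$ for every $n$, which I would establish by the argument of the comparison principle Lemma~\ref{lemma:comparison principle}: if $h - \tilde g_n$ were negative somewhere on $K_n$ it would attain a negative minimum over the finite set $K_n$ at some $x$, whence $h(x) < \tilde g_n(x)$ and, by the displayed inequality, the equation for $\tilde g_n$, and the strict monotonicity of $\psi$, $\cL h(x) \geq \psi(\alpha V(x) - V(x)h(x)) > \psi(\alpha V(x) - V(x)\tilde g_n(x)) = \cL \tilde g_n(x)$; thus $\cL(h-\tilde g_n)(x) > 0$, forcing a neighbour $y \sim x$ with $(h-\tilde g_n)(y) < (h-\tilde g_n)(x) < 0$, which must lie outside $K_n$, where $h - \tilde g_n = h \geq 0$ — a contradiction. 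Letting $n \to \infty$ gives $h \geq \alpha$, and with $h \leq \alpha$ this yields $R^{\varphi,W}(\alpha W) = \alpha$.

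The main obstacle, and the reason the comparison is carried out by hand rather than by quoting Corollary~\ref{coro:comparison principle}, is a range issue: when $\sup\psi < \sup\varphi$ and $W$ is large, $\cL h = \varphi(W(\alpha-h))$ may exceed $\sup\psi$, so $h \notin \cF_\psi$ and $\psi\inv(\cL h)$ is undefined; hence $h$ cannot be treated as a genuine $(\psi,V)$-supersolution. Moreover the domination result Corollary~\ref{coro:comparison principle} requires the potential inequality $V \geq W$, which is opposite to our hypothesis $W \geq V$, so it does not apply either. The remedy is to retain the inequality in the form $\cL h \geq \psi(\alpha V - V h)$ and to compare only the values of $\cL h$ and $\cL \tilde g_n$ through the strict monotonicity of $\psi$; this never invokes $\psi\inv$ and is therefore insensitive to whether $h$ belongs to $\cF_\psi$.
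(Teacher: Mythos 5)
Your proof is correct, but it takes a genuinely different route from the paper's. The paper factors the statement into two one-parameter moves and composes them: with the potential fixed it upgrades the nonlinearity from $\psi$ to $\varphi$ by resolvent domination (Proposition~\ref{proposition:domination}: $\alpha = R^{\psi,V}(\alpha V) \le R^{\varphi,V}(\alpha V) \le \alpha$), and with the nonlinearity fixed it upgrades the potential from $V$ to $W$ through the Liouville-type characterization of Theorem~\ref{theorem:uniqueness of bounded subsolutions}: if $0 \ge g$ is bounded with $\varphi\inv(\cL g) + Wg \ge 0$, then $g \le 0$ and $V \le W$ give $\varphi\inv(\cL g) + Vg \ge \varphi\inv(\cL g) + Wg \ge 0$, whence $g = 0$. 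You instead prove the conservation property $R^{\varphi,W}(\alpha W) \ge \alpha$ in a single pass, comparing $h = R^{\varphi,W}(\alpha W)$ directly with the finite-set resolvents $R^{\psi,V}_{K_n}\bigl((\alpha V)1_{K_n}\bigr)$ by re-running the proof of Lemma~\ref{lemma:comparison principle} with the nonlinearity kept on the right-hand side, in the range-free form $\cL h \ge \psi(V(\alpha - h))$. Your diagnosis of why this rewriting is forced is accurate on both counts: $\cL h = \varphi(W(\alpha-h))$ need not lie in $\ran\psi$, so $h \notin \cF_\psi$ in general, and Corollary~\ref{coro:comparison principle} has its potential inequality pointing the wrong way for the hypothesis $W \ge V$. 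The step that makes your range-free inequality work is the a priori bound $h \le \alpha$ from Corollary~\ref{corollary:inequality for potential}, which gives $\alpha - h \ge 0$ and lets $W \ge V$ and $\psi \le \varphi$ on $[0,\infty)$ act in the right direction; the hand-rolled minimum argument on the finite sets $K_n$ is then sound, and $\tilde g_n \nearrow R^{\psi,V}(\alpha V) = \alpha$ (Lemma~\ref{lemma:approximation via finite subsets II} plus the hypothesis) closes the proof. What the paper's factorization buys is brevity and reuse of Theorem~\ref{theorem:uniqueness of bounded subsolutions}; what yours buys is a self-contained argument at the level of Section~3 that handles both changes simultaneously and never passes through the supersolution characterization --- in effect a mixed comparison principle (different nonlinearities, $W \ge V$, data $\alpha W \ge \alpha V$) that Corollary~\ref{coro:comparison principle} does not cover.
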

\begin{proof}
We show the statement for the case $V = W$ and for the case $\varphi = \psi$. The general statement follows after applying one case first and then the other.

Case 1: $W = V$. For $\alpha > 0$ stochastic completeness at infinity with respect to $\psi, W$ and domination, see Proposition~\ref{proposition:domination}, yields 
$$\alpha = R^{\psi,W}(\alpha W) \leq R^{\varphi,W}(\alpha W) \leq \alpha.$$
This implies $R^{\varphi,W}(\alpha W)  = \alpha$, which is one of our characterizations for stochastic completeness at infinity for $\varphi$,$W$. 

Case 2: $\varphi = \psi$. Let $0 \geq g \in \ell^\infty(X) \cap \cF_\varphi$ with $\varphi^{-1}(\cL g) + W g \geq 0$. Since $V \leq W$ and $g \leq 0$, we obtain 
$$\varphi^{-1}(\cL g) + V g \geq \varphi^{-1}(\cL g) + W g \geq 0.$$
Stochastic completeness at infinity with respect to $\varphi, V$ yields $g = 0$, and we obtain the statement. 
\end{proof}

We finish this section by giving one geometric criterion for this property. 

\begin{proposition}\label{prop:geometric criterion}
Assume that for all $0 < \alpha \leq  1$ and all infinite paths $(x_n)$ in $X$ we have 
$$\sum_{n = 1}^\infty \frac{m(x_n)\varphi(\alpha W(x_n))}{\deg (x_n)} = \infty.$$
Then $b$ over $(X,m)$ with $W$ and $\varphi$ is stochastically complete at infinity. In particular, this holds if $\deg/m$ is bounded. 
\end{proposition}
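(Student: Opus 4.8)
The plan is to verify condition (iii) of Theorem~\ref{theorem:uniqueness of bounded subsolutions}, namely that every $0 \geq g \in \ell^\infty(X) \cap \cF_\varphi$ with $\varphi^{-1}(\cL g) + Wg \geq 0$ vanishes. This is equivalent to the non-linear generalized conservation property $R^{\varphi,W}(\alpha W) = \alpha$ of Definition~\ref{definition:nonlinear stochastic completeness}, so establishing it proves the proposition. So I would fix such a $g$, suppose for contradiction that $g \not\equiv 0$, and pick $x_1$ with $g(x_1) < 0$.

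First I would translate the supersolution inequality. Applying the strictly increasing $\varphi$ gives $\cL g \geq \varphi(-Wg)$ pointwise, and since $g \leq 0$ and $\varphi(0) = 0$ the right-hand side is nonnegative, being strictly positive at any vertex where $g < 0$. Written out, at any $x$ with $g(x) < 0$,
$$\sum_{y} b(x,y)(g(x) - g(y)) = m(x)\cL g(x) \geq m(x)\varphi(-W(x)g(x)) > 0.$$
Thus $g(x)$ strictly exceeds the $b$-weighted average of its neighbours (in particular $x$ has neighbours), so there is a neighbour with strictly smaller value. Since the left-hand side is $\deg(x)$ times a convex combination of the differences $g(x)-g(y)$, it is bounded by $\deg(x)\bigl(g(x)-\inf_{y\sim x}g(y)\bigr)$; hence for any prescribed $\delta>0$ I can choose a neighbour $x'$ with $g(x')$ within $\delta$ of that infimum, giving both $g(x')<g(x)$ and
$$g(x) - g(x') \geq \frac{m(x)\varphi(-W(x)g(x))}{\deg(x)} - \delta .$$
Iterating from $x_1$ with a summable sequence of errors $\delta=\varepsilon_n$ (small enough to keep each descent strict) produces an infinite path $(x_n)$ along which $g(x_n)$ strictly decreases; the vertices are then distinct and, since $g$ is bounded below, the path is genuinely infinite.

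Summing the telescoping estimate $g(x_n)-g(x_{n+1}) \geq \frac{m(x_n)\varphi(-W(x_n)g(x_n))}{\deg(x_n)} - \varepsilon_n$ over $n$, the left-hand side is bounded by $g(x_1)-\inf_X g \leq \av{g}_\infty < \infty$, so
$$\sum_{n=1}^\infty \frac{m(x_n)\varphi(-W(x_n)g(x_n))}{\deg(x_n)} < \infty.$$
Because $g(x_n) \leq g(x_1) < 0$ for all $n$ and $\varphi$ is increasing, setting $\alpha := \min\{1,-g(x_1)\} \in (0,1]$ gives $\varphi(-W(x_n)g(x_n)) \geq \varphi(\alpha W(x_n))$, whence $\sum_n \frac{m(x_n)\varphi(\alpha W(x_n))}{\deg(x_n)} < \infty$, contradicting the hypothesis for this $\alpha$ and this infinite path. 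Therefore $g \equiv 0$, which yields stochastic completeness at infinity. For the final assertion, if $\deg/m \leq D$ then each summand satisfies $\frac{m(x_n)\varphi(\alpha W(x_n))}{\deg(x_n)} \geq D^{-1}\varphi(\alpha W_0) > 0$, using $W \geq W_0 > 0$ and that $\varphi$ is strictly increasing with $\varphi(0)=0$; the series diverges along every infinite path, so the hypothesis holds.

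The hard part will be the path construction: at each step one must select a neighbour that captures essentially the full average descent while keeping the descent strict, even when the infimum of $g$ over the possibly infinitely many neighbours is not attained. Absorbing this into a summable error sequence $(\varepsilon_n)$ is the delicate point; once the telescoping inequality is secured, the remaining estimates and the passage to a fixed $\alpha \in (0,1]$ are routine.
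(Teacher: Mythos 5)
Your proof is correct and takes essentially the same approach as the paper: assuming a nontrivial nonpositive bounded supersolution, both arguments greedily construct an infinite path of near-extremal neighbours and use the pointwise inequality $\cL g \geq \varphi(-Wg)$ to force increments of size $m(x_n)\varphi(-W(x_n)g(x_n))/\deg(x_n)$ along it, then contradict the divergence hypothesis with a fixed $\alpha$ determined by $-g$ at the starting vertex. The only difference is bookkeeping: the paper controls the neighbour choice multiplicatively via factors $\rho_n>1$ with $\prod_n \rho_n^{-1}>0$ and contradicts the boundedness of $f=-g$, while you use a summable additive error sequence $(\varepsilon_n)$ and telescope to contradict divergence of the series directly --- a cosmetic variant of the same estimate.
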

\begin{proof}
Let $0 \geq g \in \ell^\infty(X) \cap \cF_\varphi$ with $\varphi^{-1}(\cL g) + Wg \geq 0$ be given and assume $g \neq 0$. Moreover, we let $(\rho_n)$ be a sequence with $ \rho_n > 1$, $n \in \IN$, with 
$$ \beta := \prod_{n = 1}^\infty \frac{1}{\rho_n} > 0. $$
By our assumption we find $x_0 \in X$ with $g(x_0) < 0$. We let $f = -g$ and use the boundedness of $f$ to inductively choose a sequence $(x_n)$ in $X$ with $x_{n+1} \sim x_n$ and 
$$\rho_{n+1} f(x_{n+1}) \geq \sup \{f(y) \mid y \sim x_n\}. $$

The inequality $\varphi^{-1}(\cL g) + Wg \geq 0$ leads to $\cL f + \varphi(W f) \leq 0$. Using the definition of $\cL$, for each $n \geq 0$ we infer
\begin{align*}
 \deg(x_n) f(x_n) + m(x_n)\varphi(W(x_n) f(x_n)) &\leq \sum_{y \in X}b(x_n,y)f(y) \\
 &\leq \deg(x_n) \rho_{n+1} f(x_{n+1}).
\end{align*}
This leads to 
$$f(x_{n+1}) \geq \frac{1}{\rho_{n+1}} \left(f(x_n) + \frac{m(x_n)\varphi(W(x_n) f(x_n))}{\deg(x_n)} \right),$$
which immediately yields $f(x_n) \geq \beta f(x_0) > 0$ for all $n \in \IN$.

Iterating the inequality and using $f(x_n) \geq  \beta f(x_0)$, we infer 
$$f(x_{n+1}) \geq \beta \sum_{k=0}^{n} \frac{m(x_k)\varphi(W(x_k) \beta f(x_0))}{\deg(x_k)}. $$
Our assumptions imply that the sum on the right side diverges, as $n \to \infty$ (use that $\varphi(W(x_k) \beta f(x_0)) \geq \varphi(W(x_k))$ if $\beta f(x_0) > 1$). This is a contradiction to $f$ being bounded.

The 'In particular'-part: If for some $C > 0$ we have $\deg/m \leq C$, then 
$$\frac{m(x)\varphi(\alpha W(x))}{\deg (x)} \geq \frac{\varphi(\alpha W_0)}{C}>0,$$
where $W_0 = \inf_{x \in X} W(x) > 0$. Hence, the summability criterion immediately yields the statement. 
\end{proof}

\begin{remark} In the linear case $\varphi = {\rm id}_\IR$ a similar criterion is used to show that given a weighted graph $b$ over  $(X,m)$ one can always find $W$ (respectively $c$ in the notation of Subsection~\ref{subsection:stochastic completeness}) making it stochastically complete at infinity, see  \cite[Theorem~2]{kel12}. For example, $W = \deg/m + 1$ does the job. In the semi-linear case of general $\varphi$ however, this is no longer possible. For example, if $\varphi$ is bounded, the sum in the proposition may just always be finite. In contrast, under some mild growth conditions on $\varphi$, we can establish the existence of $W$ making the graph with $\varphi$ stochastically complete at infinity. This is discussed in the next corollary.  
\end{remark}

\begin{corollary}\label{coro:large potential}
Assume that a weighted graph $b$ over $(X,m)$ is given.  Assume further $\lim_{t \to \infty} \varphi(t) = \infty$ and that for each $0 < \alpha \leq 1$ there exists $C_\alpha > 0$ such that $\varphi(\alpha t) \geq C_\alpha \varphi(t)$ for all $t \geq 0$. Then there exists $W \colon X \to (0,\infty)$ with $\inf_{x \in X} W(x) > 0$ such that $b$ over $(X,m)$ with $\varphi$ and $W$ is stochastically complete at infinity. 
\end{corollary}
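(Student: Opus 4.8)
The plan is to apply the geometric criterion of Proposition~\ref{prop:geometric criterion}. Concretely, I would construct $W$ so that, for every vertex $x$ with $\deg(x)>0$, the single-vertex contribution $\frac{m(x)\varphi(W(x))}{\deg(x)}$ is bounded below by a fixed positive constant, and then transfer this bound to all $\alpha \in (0,1]$ using the scaling hypothesis $\varphi(\alpha t) \geq C_\alpha \varphi(t)$.

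First I would use $\lim_{t\to\infty}\varphi(t) = \infty$ to define $W$ pointwise. For each $x \in X$ with $\deg(x) > 0$, since $\varphi$ takes arbitrarily large values I can pick $s_x \geq 0$ with $\varphi(s_x) \geq \deg(x)/m(x)$, and set $W(x) = \max\{1, s_x\}$; for isolated vertices (which lie on no infinite path) I simply put $W(x) = 1$. This guarantees $W(x) \geq 1$, hence $\inf_{x \in X} W(x) \geq 1 > 0$, and by monotonicity of $\varphi$,
$$\frac{m(x)\varphi(W(x))}{\deg(x)} \geq 1 \quad \text{for all } x \text{ with } \deg(x) > 0.$$

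Next I would incorporate the scaling hypothesis. For $0 < \alpha \leq 1$ the assumption $\varphi(\alpha t) \geq C_\alpha \varphi(t)$, applied at $t = W(x)$, gives
$$\frac{m(x)\varphi(\alpha W(x))}{\deg(x)} \geq C_\alpha \frac{m(x)\varphi(W(x))}{\deg(x)} \geq C_\alpha > 0.$$
Since every infinite path $(x_n)$ consists of vertices with $\deg(x_n) > 0$ (because $x_n \sim x_{n+1}$ forces $b(x_n,x_{n+1}) > 0$, whence $\deg(x_n) \geq b(x_n,x_{n+1}) > 0$), each term of the sum in Proposition~\ref{prop:geometric criterion} is at least $C_\alpha$, so the series diverges for every $\alpha \in (0,1]$ and every infinite path. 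The proposition then yields stochastic completeness at infinity with this $W$ and $\varphi$.

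The argument has no serious obstacle; the only point requiring care is that a \emph{single} $W$ must work simultaneously for all $\alpha \in (0,1]$. This is exactly what the scaling hypothesis provides: it decouples the $\alpha$-dependence into a multiplicative constant $C_\alpha$ and reduces everything to the case $\alpha = 1$, where the pointwise choice of $W$ makes each summand at least $1$. Without such a hypothesis (for instance for bounded $\varphi$, as the preceding remark points out) no choice of $W$ need work, so the scaling condition together with $\lim_{t\to\infty}\varphi(t)=\infty$ is genuinely what drives the construction.
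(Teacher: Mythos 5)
Your proof is correct and takes essentially the same approach as the paper: choose $W \geq 1$ with $m\,\varphi(W) \geq \deg$ pointwise (the paper takes $W = \varphi^{-1} \circ (\deg/m) + 1$, which is exactly your construction in closed form), use the scaling hypothesis to bound each summand below by $C_\alpha$, and invoke Proposition~\ref{prop:geometric criterion}. Your extra care about isolated vertices and positive degrees along infinite paths is harmless but already implicit in the paper's argument.
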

\begin{proof}
Using that $\varphi$ tends to $\infty$, as $t \to \infty$, we may choose $W$ such that $ m (\varphi \circ W) \geq \deg$ and $W \geq 1$.  For example, $W = \varphi^{-1} \circ (\deg /m) + 1$ does the job. Using our assumptions on $\varphi$ we infer
$$\frac{m(x) \varphi(\alpha W(x))}{\deg(x)} \geq C_\alpha \frac{m(x) \varphi(W(x))}{\deg(x)} \geq C_\alpha. $$
Hence, the previous proposition yields the claim. 
\end{proof}

\begin{remark}
The assumption of the corollary is satisfied if $\varphi|_{[0,\infty)}$ is concave. For example, it can be applied if $\varphi(t) = \log (1 + t)$ for $t \geq 0$. It is also satisfied for power functions $\varphi(t) = t^p$, $t \geq 0$, where $0 < p < \infty$. 
\end{remark}

\section{Stochastic completeness and semi-linear Liouville properties}

In this section we  apply the theory developed in the previous section to characterize stochastic completeness by semi-linear Liouville properties.  We start with some notation. We let $\mathcal I$ be the set of all strictly increasing continuous functions $\varphi \colon \IR \to \IR$ with $\varphi(0) = 0$. 
Moreover, we let $\mathcal A =\{W \colon X \to [0,\infty) \mid \inf_{x \in X} W(x) > 0\}$ and recall that $\cF_\varphi = \{f \in \cF \mid \cL f(x) \in {\rm ran}\,\varphi \text{ for all }x \in X\}$.


\begin{theorem}[Characterization of stochastic completeness] \label{theorem:stochastic completeness}
The following assertions are equivalent. 
\begin{enumerate}[(i)]
 \item The graph $b$ over $(X,m)$ is stochastically complete. 
 
 \item For one $W \in \mathcal A \cap \ell^\infty(X)$ (any $W\in \mathcal A$) and one $\varphi \in \mathcal I$ (any $\varphi \in \mathcal I$) we have 
 $$ R^{\varphi, W} (\beta W) = \beta, \quad \text{ all } \beta > 0.$$
  \item For one $W \in \mathcal A \cap \ell^\infty(X)$ (any $W\in \mathcal A$) and one $\varphi \in \mathcal \mathcal I$ (any $\varphi \in \mathcal I$) every $0 \geq g \in \ell^\infty(X) \cap \cF_\varphi$ with 
 $\varphi^{-1}(\cL g) + Wg = 0$
 satisfies $g = 0$.
 
  \item For one $W \in \mathcal A \cap \ell^\infty(X)$ (any $W\in \mathcal A$) and one $\varphi \in \mathcal I$ (any $\varphi \in \mathcal I$) every $g \in \ell^\infty(X) \cap \cF_\varphi$ with  $\varphi^{-1}(\cL g) + Wg  \geq 0$ satisfies $g \geq 0$.

 \item For one $W \in \mathcal A \cap \ell^\infty(X)$ (any $W\in \mathcal A$) and one $\varphi \in \mathcal I$ (any $\varphi \in \mathcal I$) every $0\geq g \in \ell^\infty(X) \cap \cF_\varphi$ with 
 $\varphi^{-1}(\cL g) + Wg \geq 0$
 satisfies $g = 0$. 
 
 \item For one $W \in \mathcal A \cap \ell^\infty(X)$ (any $W\in \mathcal A$) and one $\varphi \in \mathcal I$ (any $\varphi \in \mathcal I$) every $0 \leq f \in \ell^\infty(X) $ with 
 $ - \cL f = \varphi(Wf)$
satisfies $f = 0$. 

\end{enumerate}
%
%
%
%
%

\end{theorem}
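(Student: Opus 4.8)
The plan is to fix a pair $(W,\varphi)$ with $W\in\mathcal A$ and $\varphi\in\mathcal I$ and first note that, by Theorem~\ref{theorem:uniqueness of bounded subsolutions}, the five statements obtained by freezing this pair in (ii)--(vi) are all equivalent; I denote their common content by $\mathrm{P}(W,\varphi)$ (``$(W,\varphi)$ is stochastically complete at infinity''). It then suffices to prove two implications: that (i) forces $\mathrm{P}(W,\varphi)$ for \emph{every} admissible pair — this yields the ``any'' versions of (ii)--(vi) and, a fortiori, the ``one'' versions — and conversely that $\mathrm{P}(W,\varphi)$ for a \emph{single} pair with $W\in\mathcal A\cap\ellinfty$ already implies (i). Together with the trivial passage ``any $\Rightarrow$ one'' (take e.g.\ $W=1$, $\varphi=\mathrm{id}$) this closes the equivalence.

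For the first implication I would argue through (vi) using the Omori--Yau maximum principle, Proposition~\ref{proposition:stochastic completeness}(iii) with $c=0$. Suppose $0\le f\in\ellinfty$ solves $-\cL f=\varphi(Wf)$ and $f\neq 0$, so $f^*:=\sup f\in(0,\infty)$. For $\delta\in(0,f^*)$ and $x$ in the superlevel set $X_\delta=\{f>f^*-\delta\}$ we have $W(x)f(x)\ge W_0(f^*-\delta)$ with $W_0=\inf W>0$, hence $\cL f(x)=-\varphi(W(x)f(x))\le-\varphi\big(W_0(f^*-\delta)\big)<0$. Thus $\sup_{x\in X_\delta}\cL f(x)<0$, contradicting the maximum principle, so $f=0$. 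This step uses only $\inf W>0$, not boundedness, and therefore gives $\mathrm{P}(W,\varphi)$ for all $W\in\mathcal A$, $\varphi\in\mathcal I$.

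The substantive implication is the converse, which I would prove in contrapositive form. Assume the graph is \emph{not} stochastically complete. By Proposition~\ref{proposition:stochastic completeness}(ii) (with $c=0$) there is $0\le u\in\ellinfty$, $u\neq0$, with $\cL u+u\le 0$; set $U^*=\sup u\in(0,\infty)$ and $W_1=\sup W<\infty$. As $s\uparrow U^*$ the quantity $\varphi(W_1(U^*-s))$ tends to $\varphi(0)=0$ while $s\to U^*>0$, so I may fix $s_*\in(0,U^*)$ with $\varphi(W_1(U^*-s_*))\le s_*$. Put $f:=(u-s_*)_+$, which is nonnegative, bounded, and nonzero since $\sup u>s_*$. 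Using only convexity of the truncation $t\mapsto(t-s_*)_+$ one obtains the pointwise bound $\cL f\le 1_{\{u>s_*\}}\,\cL u$, so at every $x$ with $u(x)>s_*$,
\[
\cL f(x)\le \cL u(x)\le -u(x)\le -s_*\le -\varphi\big(W_1(U^*-s_*)\big)\le -\varphi(W(x)f(x)),
\]
where the last step uses $W(x)f(x)\le W_1(U^*-s_*)$; and at every $x$ with $u(x)\le s_*$ we have $f(x)=0$ and $f$ attains a minimum, so $\cL f(x)\le 0=-\varphi(W(x)f(x))$. Hence $\cL f+\varphi(Wf)\le 0$ everywhere: $f$ is a nontrivial bounded nonnegative subsolution. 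The boundedness of $W$ enters decisively here, since it confines the argument of $\varphi$ to $[0,W_1(U^*-s_*)]$ and sidesteps the behaviour of $\varphi$ near $0$; I expect this construction to be \emph{the main obstacle}, because when $\varphi(t)/t\to\infty$ as $t\to0^+$ (e.g.\ $\varphi$ concave at the origin) one cannot simply rescale the linear escape function $u$, and it is exactly the truncation $(u-s_*)_+$, rather than a multiple of $u$, that saves the argument.

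Finally, since $f$ need not lie in $\cF_\varphi$ (its image under $\cL$ may leave $\ran\varphi$), I would not invoke minimality of $R^{\varphi,W}(\alpha W)$ directly but compare $f$ with the finite-set Dirichlet resolvents. Fix $\alpha\ge U^*-s_*=\sup f$ and finite sets $K_n\nearrow X$; the functions $f_{\alpha,n}:=\alpha-R^{\varphi,W}_{K_n}(\alpha W1_{K_n})$ solve $\cL f_{\alpha,n}=-\varphi(Wf_{\alpha,n})$ on $K_n$ and equal $\alpha\ge\sup f$ off $K_n$, so $f_{\alpha,n}-f\ge0$ off $K_n$. A minimum-principle argument as in Lemma~\ref{lemma:comparison principle} gives $f_{\alpha,n}\ge f$: if $f_{\alpha,n}-f$ had a negative minimum at some $x_0\in K_n$ (hence a global minimum), then $\cL(f_{\alpha,n}-f)(x_0)\le0$, whereas the equation for $f_{\alpha,n}$, the subsolution inequality for $f$, and the strict monotonicity of $\varphi$ force it to be strictly positive. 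Letting $n\to\infty$, Lemma~\ref{lemma:approximation via finite subsets II} yields $f_{\alpha,n}\searrow\alpha-R^{\varphi,W}(\alpha W)$, whence $\alpha-R^{\varphi,W}(\alpha W)\ge f\neq0$ and so $R^{\varphi,W}(\alpha W)\neq\alpha$. This negates $\mathrm{P}(W,\varphi)$, completing the contrapositive and hence the equivalence.
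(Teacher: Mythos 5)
Your proposal is correct, and its skeleton coincides with the paper's: both reduce (ii)--(vi) for a fixed pair $(W,\varphi)$ to Theorem~\ref{theorem:uniqueness of bounded subsolutions}, prove the direction from stochastic completeness via the Omori--Yau maximum principle of Proposition~\ref{proposition:stochastic completeness}, and prove the converse in contrapositive form by truncating the escape function from incompleteness to manufacture a nontrivial bounded nonnegative subsolution of $\cL f + \varphi(Wf)\leq 0$ (the paper rescales the spectral parameter so that $\varphi < \alpha/2$ on $[0,\sup W]$ and truncates at half the maximum, while you fix $\beta=1$ and slide the truncation level $s_*$ toward $\sup u$ until $\varphi(W_1(U^*-s_*))\leq s_*$ --- a cosmetic difference; your Omori--Yau step, run through (vi) with the uniform bound $\sup_{X_\delta}\cL f\leq -\varphi(W_0(f^*-\delta))<0$, is if anything cleaner than the paper's $\varepsilon$--$\delta$ argument for (v)). The genuine divergence is in how the subsolution is turned into a contradiction. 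The paper must contradict assertion (v), so it faces the membership problem $-f\in\cF_\varphi$, i.e.\ $\cL(-f)\in\operatorname{ran}\varphi$, and resolves it by invoking the stability result Proposition~\ref{proposition:stability of stochastic completeness at infinity} (itself resting on resolvent domination) to assume without loss of generality that $\varphi$ is unbounded. You correctly identify the same obstruction but sidestep it entirely: since both the equation for $f_{\alpha,n}=\alpha-R_{K_n}^{\varphi,W}(\alpha W 1_{K_n})$ and your subsolution inequality are stated in ``$\varphi$-form'' (never applying $\varphi^{-1}$ to $\cL f$), you can run a one-sided minimum-principle argument by hand --- a variant of Lemma~\ref{lemma:comparison principle} that does not require $f\in\cF_\varphi$ --- and then let $n\to\infty$ via Lemma~\ref{lemma:approximation via finite subsets II} to get $\alpha - R^{\varphi,W}(\alpha W)\geq f\neq 0$, falsifying (ii) directly. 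This buys self-containedness (no stability proposition, no enlargement of $\varphi$) and yields as a byproduct the maximality bound $\alpha-R(\alpha W)\geq f$ for every bounded subsolution, which is exactly the content of the paper's closing corollary on maximal solutions; the paper's route is shorter once Proposition~\ref{proposition:stability of stochastic completeness at infinity} is in place and reuses the established comparison machinery instead of re-deriving it. All the individual steps you give (the choice of $s_*$, the bound $\cL f\leq 1_{\{u>s_*\}}\cL u$, the strictness at a negative global minimum, the monotone limit of the finite-set resolvents) check out against the paper's lemmas.
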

\begin{proof} In this proof we use the following notation:  If there are 'for one/any'-statements in the assertions,  we attach 'one' or 'any' to the number of the assertion to specify them in the order of appearance. For example '(v) one any' stands for the assertion 'For one $W \in \mathcal A \cap \ell^\infty(X)$ and any $\varphi \in \mathcal I$ every $0\geq g \in \ell^\infty(X) \cap \cF_\varphi$ with 
 $\varphi^{-1}(\cL g) + Wg \geq 0$
 satisfies $g = 0$'. 

Obviously the 'for any'-statements imply the 'for one'-statements and so we only have to prove the opposite implications. 

For fixed $A,B \in \{\text{one,any}\}$, Theorem~\ref{theorem:uniqueness of bounded subsolutions} shows the equivalence of (ii)$A B$, (iii) $A B$, (iv)$A B$, (v)$A B$ and (vi)$A B$. Moreover, it shows that '(iv) any any implies (iii) one one' and  '(iii) one one implies (iv) one one'.


Hence, in order to establish the claimed equivalences it suffices to show '(i) implies (v) any any' and '(v) one one implies (i)'.

(i) implies (v) any any: Let $W \in \mathcal A$,  $\varphi \in \mathcal I$ and let $0 \geq g \in \ell^\infty(X) \cap \cF_\varphi$ with $\varphi^{-1}(\cL g) + W g \geq 0$. Below we apply the Omori-Yau maximum principle of Proposition~\ref{proposition:stochastic completeness} to $f = -g$ in order to conclude $g = 0$. 

Let $\alpha = \inf_{x \in X} W(x) > 0$. Since $f$ is nonnegative, we have $\alpha f \leq Wf$, which yields (after rearranging the inequality for $g$)
$$\cL f + \varphi(\alpha f) \leq \cL f + \varphi(W f) \leq 0. $$

Assume that  $f^* = \sup_{x \in X} f(x) > 0$. We let  $\varepsilon >0$  and choose $\delta > 0$ such that $|t- \alpha f^*| < \delta$ implies $|\varphi(t) - \varphi(\alpha f^*)| < \varepsilon$. The Omori-Yau maximum principle implies the existence of $\xi \in X$ with $f(\xi) > f^* - \delta/\alpha$ and $\mathcal \cL f(\xi) > -\varepsilon$. Then $|\alpha f(\xi) - \alpha f^*| < \delta$ and our choices of the parameters yield 
$$\varphi(\alpha f^*) \leq  \varphi(\alpha f(\xi)) + \varepsilon \leq  - \cL f(\xi) + \varepsilon  < 2 \varepsilon.  $$
Since $\varepsilon > 0$ was arbitrary, we infer $\varphi(\alpha f^*) = 0$. Using that $\varphi$ is strictly increasing with $\varphi(0) = 0$, we obtain $f^* = 0$, a contradiction to our assumption $f^* > 0$. Hence, $0 =  f  = -g$.

(v) one one implies (i): Let $W \in \mathcal A \cap \ell^\infty(X)$ and $\varphi \in \mathcal I$ for which (v) is satisfied. The stability result Proposition~\ref{proposition:stability of stochastic completeness at infinity} yields that (v) is satisfied for any $\varphi' \in \mathcal I$ with $\varphi'(t) \geq \varphi(t)$, $t \geq 0$. Hence, we can assume without loss of generality, that $\lim_{t \to \infty} \varphi(t) = \infty$. 

 Assume $b$ over $(X,m)$ is not stochastically complete. Our unboundedness assumption on $\varphi$ implies that if $f \in \cF$ satisfies $\cL f \geq 0$, then $f \in \cF_\varphi$. Below we construct a nontrivial $0 \leq h \in \ell^\infty(X)$ with $\cL h + \varphi(Wh) \leq 0$. For $g  = -h$ this implies $\cL g \geq \varphi(-W g) \geq 0$, which shows $g \in \cF_\varphi$ and $\varphi\inv(\cL g) + W g \geq 0$, a contradiction to (v). 

We let $\beta = \sup_{x \in X} W(x) > 0$ and use the continuity of $\varphi$ to choose $\alpha > 0$ such that $\varphi(t) < \alpha/2$ for all $0 \leq t \leq \beta$. 

According to Proposition~\ref{proposition:stochastic completeness} (applied with $c = 0$) there exists  $0 \leq f \in \ell^\infty(X)$ with $\av{f}_\infty = 1$ and $\cL f \leq -\alpha f$.  We consider the function $h = (f - 1/2)_+$. Then $h \neq 0$,  $0 \leq h \leq 1/2$ and a direct computation shows 
$$\cL h(x) \leq \begin{cases}
                 \cL f (x) &\text{if } f(x) > 1/2\\
                 0 &\text{if } f(x) \leq 1/2
                \end{cases}.
$$
Moreover, if $f(x) \leq 1/2$, we have $\varphi(W(x) h(x)) = 0$ and if $f(x) > 1/2$, we obtain
$$ \varphi(W(x) h(x)) \leq \varphi(\beta) < \alpha/2 \leq \alpha f(x) \leq -\cL f(x). $$
Put together, this implies $\cL h + \varphi(Wh) \leq 0$, a contradiction to (v). 
 \end{proof}

\begin{remark}
\begin{enumerate}[(a)]
 \item Our characterization shows in particular that if $b$ is stochastically complete, then for all $W \in \mathcal A$ and all $\varphi \in \mathcal I$ the graph $b$ with $W$ and $\varphi$ is stochastically complete at infinity. In order to deduce stochastic completeness from stochastic completeness at infinity for one particular $W \in \mathcal A$ and $\varphi \in I$, we have to additionally assume that $W$ is bounded. Indeed, under mild assumptions on $\varphi$ Corollary~\ref{coro:large potential} shows that even if $b$ is not stochastically complete, one can find a large $W$ making $b$ with $W$ and $\varphi$ stochastically complete at infinity.  
 \item In the case when $W = \alpha$ and $\varphi = {\rm id}_\IR$, assertion (ii) is also known as the $L^\infty$-positivity preserving property, a notion that was introduced in \cite{Gun16}.  That it is related to stochastic completeness was observed in \cite{BM23} for Riemannian manifolds. In the linear situation on  Riemannian manifolds the most delicate part is the question about regularity of bounded functions  satisfying $(-\Delta + \alpha) u \geq 0$ in the sense of distributions, a problem which disappears in the discrete case.
\end{enumerate}

\end{remark}
 
 The one/one statements yield particularly strong non-uniqueness results in the stochastically incomplete case. We only mention one. 
 
\begin{corollary}
 The following assertions are equivalent. 
 \begin{enumerate}[(i)]
  \item The graph $b$ over $(X,m)$ is not stochastically complete. 
  \item For all $W \in \mathcal A \cap \ell^\infty(X)$ and all $\varphi \in \mathcal I$ there exists a nontrivial $0 \leq f \in \ell^\infty(X)$ such that $-\cL f = \varphi(Wf)$. 
 \end{enumerate}
 In this case, for $\alpha > 0$, $W \in \mathcal A \cap \ell^\infty(X)$ and $\varphi \in \mathcal I$ the function $w_\alpha = \alpha - R(\alpha W)$ is the maximal function $0 \leq w_\alpha \leq \alpha$ satisfying $-\cL w_\alpha = \varphi(W w_\alpha)$.  
\end{corollary}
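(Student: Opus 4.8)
The plan is to deduce this corollary directly from Theorem~\ref{theorem:stochastic completeness} together with the minimality machinery already developed, rather than proving anything from scratch. The equivalence of (i) and (ii) is essentially the contrapositive of the equivalence of (i) and (vi) in Theorem~\ref{theorem:stochastic completeness}. First I would note that the negation of stochastic completeness is, by that theorem applied with the \emph{any/any} version of (vi), equivalent to: for all $W \in \mathcal A \cap \ell^\infty(X)$ and all $\varphi \in \mathcal I$ there exists a nontrivial $0 \leq f \in \ell^\infty(X)$ with $-\cL f = \varphi(Wf)$. This is precisely statement (ii), so the equivalence follows immediately. The only care needed is to match the quantifiers: stochastic completeness being equivalent to the \emph{one/one} form of (vi) means its failure is equivalent to the \emph{any/any} form of the negated existence statement, which is exactly what (ii) asserts.

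For the final sentence, the task is to identify $w_\alpha = \alpha - R(\alpha W)$ as the maximal bounded nonnegative solution of $-\cL w_\alpha = \varphi(W w_\alpha)$. First I would verify that $w_\alpha$ is indeed such a solution: by Lemma~\ref{lemma:minimality of resolvent} the function $-w_\alpha = R(\alpha W) - \alpha$ satisfies $\varphi^{-1}(\cL(-w_\alpha)) + W(-w_\alpha) = 0$, which (using the equivalence of (ii) and (v) from Theorem~\ref{theorem:uniqueness of bounded subsolutions}, i.e. that $g \in \cF_\varphi$ with $\varphi^{-1}(\cL g) + Wg = 0$ is the same as $\cL g = \varphi(-Wg)$) rewrites as $\cL(-w_\alpha) = \varphi(W w_\alpha)$, that is $-\cL w_\alpha = \varphi(W w_\alpha)$. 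The bounds $0 \leq w_\alpha \leq \alpha$ come directly from Corollaries~\ref{corollary:inequality for potential} and \ref{coro:monotonicity resolvent}, exactly as recorded in Lemma~\ref{lemma:minimality of resolvent}.

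For maximality, suppose $0 \leq f \leq \alpha$ is any bounded solution of $-\cL f = \varphi(Wf)$. Setting $h = -f$ gives $0 \geq h \geq -\alpha$ with $\varphi^{-1}(\cL h) + Wh = 0$, hence in particular $\varphi^{-1}(\cL h) + Wh \geq 0$ and $h \geq -\alpha$. The minimality clause of Lemma~\ref{lemma:minimality of resolvent} then yields $R(\alpha W) - \alpha \leq h$, i.e. $-w_\alpha \leq -f$, which is $f \leq w_\alpha$. This establishes maximality. The main (and essentially only) obstacle is bookkeeping: one must translate carefully between the three equivalent formulations of the equation (the supersolution inequality $\varphi^{-1}(\cL g) + Wg \geq 0$, the equality $\varphi^{-1}(\cL g) + Wg = 0$, and the semi-linear form $-\cL f = \varphi(Wf)$) and keep the quantifiers straight when invoking Theorem~\ref{theorem:stochastic completeness}; no new analysis is required beyond what Lemma~\ref{lemma:minimality of resolvent} already provides.
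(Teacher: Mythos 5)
Your proposal is correct and is exactly the argument the paper intends (the corollary is stated without proof as a direct consequence): the equivalence of (i) and (ii) is the contrapositive of the one/one versus any/any forms of assertion (vi) in Theorem~\ref{theorem:stochastic completeness}, and the maximality of $w_\alpha = \alpha - R(\alpha W)$ follows from Lemma~\ref{lemma:minimality of resolvent} after the sign flip $h = -f$, with the translation between $\varphi^{-1}(\cL g) + Wg = 0$ and $-\cL f = \varphi(Wf)$ handled as in the proof of Theorem~\ref{theorem:uniqueness of bounded subsolutions}. Your quantifier bookkeeping and the verification that $h = -f$ lies in $\cF_\varphi$ (since $\cL h = \varphi(-Wh) \in \operatorname{ran}\varphi$) are both sound, so nothing is missing.
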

 \begin{remark}
  The previous corollary is the precise analogue of the elliptic characterization of stochastic completeness in \cite[Theorem~1.1]{gri23} (if $W = 1$). Note however, that our considerations go beyond, as we also have a characterization via supersolutions and a theory of maximal solutions. 
 \end{remark}

\bibliographystyle{alpha}
\bibliography{stoch-comp.bib}

\end{document}